\documentclass[11pt,reqno]{amsart}
\usepackage[utf8]{inputenc}
\usepackage{amsmath}
\usepackage{amsfonts}
\usepackage{amssymb}
\usepackage{amsthm}
\usepackage{hyperref}
\usepackage{multicol}
\usepackage{pictexwd, dcpic}
\usepackage{fancyhdr}
\usepackage{mathrsfs}
\usepackage{latexsym,mathtools}
\usepackage{bm, enumerate}
\usepackage{graphicx}
\usepackage{color,soul}
\usepackage{xcolor}
\usepackage{hyperref}
\usepackage{dsfont}
\usepackage{todonotes}
\usepackage{tikz-cd}
\makeatletter
\@namedef{subjclassname@2020}{%
	\textup{2020} Mathematics Subject Classification}
\makeatother

\newcommand{\beq}{\begin{eqnarray}}
	\newcommand{\eeq}{\end{eqnarray}}
\newcommand{\beqn}{\begin{eqnarray*}}
	\newcommand{\eeqn}{\end{eqnarray*}}
\newcommand{\rar}{\rightarrow}
\newcommand*{\mf}{\mathbf}
\newcommand*{\bs}{\boldsymbol}

\reversemarginpar  

\numberwithin{equation}{section}

\newtheorem{theorem}{\bf Theorem}[section]
\newtheorem{proposition}[theorem]{\bf Proposition}
\newtheorem{lemma}[theorem]{\bf Lemma}

\theoremstyle{remark}
\newtheorem{definition}[theorem]{\bf Definition}
\newtheorem{example}[theorem]{\bf Example}
\newtheorem{Non-example}[theorem]{\bf Non-Example}
\newtheorem{remark}[theorem]{\bf Remark}

\newcommand{\clh}{\mathcal{H}}

\newcommand{\clm}{\mathcal{M}}

\newcommand{\cls}{\mathcal{S}}

\newcommand{\z}{\bm{z}}
\newcommand{\w}{\bm{w}}

\newcommand*{\Ge}{\geqslant}

\newcommand*{\inp}[2]{\langle{#1},\,{#2} \rangle}
\newcommand*{\Le}{\leqslant}

 \newcommand{\ntwo}{\mathbb N_{\Ge 2}}

\theoremstyle{definition}

\theoremstyle{remark}

\hypersetup{
	colorlinks,
	citecolor=blue,
	linkcolor=blue,
	urlcolor=blue}

\begin{document}
\title[Isometric rigidity of complete Pick algebras on monomial varieties]{Isometric rigidity of complete Pick algebras on monomial varieties}

\author[H. Ahmed]{Hamidul Ahmed}
\address{Department of Mathematics, Indian Institute of Technology Bombay,
Powai, Mumbai, 400076, India}
\email{hamidulahmedslk@gmail.com, 22D0785@iitb.ac.in}

\author[B.K. Das]{B. Krishna Das}
\address{Department of Mathematics, Indian Institute of Technology Bombay,
Powai, Mumbai, 400076, India}
\email{bata436@gmail.com, dasb@math.iitb.ac.in}

\author[C. K. Sahu]{Chaman Kumar Sahu}
\address{Department of Mathematics, Indian Institute of Technology Bombay,
Powai, Mumbai, 400076, India}
\email{sahuchaman9@gmail.com, chamanks@math.iitb.ac.in}

\subjclass[2020]{14M10, 11M41, 47L15, 47A57, 46E22}
\keywords{Algebraic variety, Dirichlet series, Multiplier algebra, Complete Nevanlinna-Pick space.}

\begin{abstract}
We establish a geometric rigidity theorem for a class of monomial varieties in finite-dimensional unit balls. Every ball automorphism mapping one such variety onto another is unitary, and whenever such an equivalence exists, it can also be realized by a permutation of the ambient coordinates. As a consequence, we obtain a complete isometric classification of the multiplier algebras of integrally generated complete Nevanlinna--Pick Dirichlet series kernels in terms of their defining weight and frequency data. 
We also obtain an explicit isometric classification for a class of complete Nevanlinna--Pick kernels on bounded Reinhardt domains.
\end{abstract}
\maketitle
\section{Introduction and Main results}

A recurring theme in mathematics is that a space is determined by a certain algebra of functions on it. For compact Hausdorff spaces $X$ and $Y$, the Gelfand-Kolmogorov theorem \cite{Gelfand} states that the algebras $C(X)$ and $C(Y)$ of continuous functions are isomorphic (or isometrically isomorphic) if and only if $X$ and $Y$ are homeomorphic. In complex analysis, a theorem of L. Bers \cite{Bers} asserts that two domains $V$ and $W$ in $\mathbb C$ are biholomorphic if and only if the corresponding algebras of holomorphic functions $\text{Hol}(V)$ and $\text{Hol}(W)$ are isomorphic. Rudin \cite{Rudin-conf} extended this algebraic characterization to arbitrary open Riemann surfaces. 
In Algebraic geometry, two affine varieties are isomorphic if and only if the algebras of polynomial functions on the varieties are isomorphic \cite[Corollary~I.3.7]{Hartshorne}.  This is by no means an exhaustive list of results in this direction; see for instance \cite{Heins, Kakutani, Nakai, Ozkin, Rudin-bounded} and references therein. 

These results strongly indicate in several contexts that the geometry of the underlying domain is encoded in a suitable function algebra. It is therefore natural to ask to what extent this phenomenon persists for function algebras, such as multiplier algebras of reproducing kernel Hilbert spaces. In this paper, we answer this question for multiplier algebras of \emph{integrally generated complete Nevanlinna Pick (CNP) Dirichlet series kernels}. These kernels are specified by a pair of weight and frequency data, and their associated multiplier varieties are \emph{monomial varieties}. Our main result can be summarized as follows:
\begin{quote}
\emph{Two such multiplier algebras are isometrically isomorphic if and only if their associated monomial varieties coincide up to a permutation automorphism of the ambient coordinates. This is also equivalent to the underlying pairs of weight and frequency data admitting a similar pattern, up to a permutation.}
\end{quote}
These results belong to the same reconstruction tradition as the classical theorems above but concern a non-selfadjoint function algebra with its multiplier norm.
In finite dimension, the general theorem of Davidson, Ramsey, and Shalit \cite{DRS} characterizes isometric isomorphism by equivalence of the associated varieties under automorphisms of the unit ball. Our contribution is to reduce this equivalence to a permutation of coordinates for the monomial varieties considered here. As a result, this yields an easily verifiable classification directly in terms of the weight and frequency data associated with the kernel. The key to our classification is a geometric rigidity theorem with two complementary conclusions. Every automorphism of the unit ball that maps one monomial variety in the class considered here onto another must be unitary. Moreover, whenever two such varieties are unitarily equivalent, their equivalence can also be realized by a permutation of the ambient coordinates. 
In particular, it extends the isometric classification obtained in \cite[Theorem~6.2]{ADS} for a single dependent frequency to arbitrary finite integrally generated frequency data. We now describe the setting and state our results precisely.



The study of Hilbert spaces of Dirichlet series began with the work of Hedenmalm, Lindqvist and Seip \cite{HLS}, who identified the multiplier algebra of the Hardy space of Dirichlet series and, as an application, solved a completeness problem of Beurling \cite{Beurling}. These spaces have since been studied extensively in several contexts in literature; see \cite{AOS, BPS, C-S, GH, MS, Seip} and the monograph \cite{QQ}. Reproducing kernel Hilbert spaces with the complete Nevanlinna--Pick property have also been studied extensively, owing to their rich interpolation theory and connections with operator theory; see \cite{AM1, AM, Quiggin, Mccull92, Hartz, ADP}. In the context of Dirichlet series, the CNP spaces were first studied and characterized by McCarthy and Shalit in \cite{MS}. We follow an equivalent characterization of such kernels obtained in \cite[Theorem~3.2]{ADS}.   
CNP Dirichlet series kernels defined on some right half plane $\mathbb H_\rho=\{s \in \mathbb C : \text{Re}(s) > \rho \}$, $\rho\in\mathbb R$, are precisely those of the form
\beqn
K_{\mf b, \mf n}(s,u) = \frac{1}{1-\sum_{j=1}^d b_jn_j^{-s-\overline{u}}},\quad s, u \in \mathbb H_\rho
\eeqn
for some $d\in\mathbb N \cup \{\infty\}$, $d$-tuples $\mf n=(n_1, n_2, \dots, n_d)$ of distinct positive integers with $n_i\ge 2$, and $\mf b=(b_j)_{j=1}^d \in (0, \infty)^d$. Here $\mathbb N$ is the set of all natural numbers excluding zero. 
We refer to $\mf b$ and $\mf n$ as the \emph{weight and frequency} data, respectively. A CNP Dirichlet series kernel is completely determined by its associated weight and frequency data $\mf b$ and $\mf n$; we therefore denote it by $K_{\mf b,\mf n}$. Note that, in terms of the function
$f_{\mf b, \mf n}:\mathbb H_\rho \rar \mathbb B_d$ defined by
\beq
\label{f-at-s-intro}
f_{\mf b, \mf n}(s) =(\sqrt{b_j} n_j^{-s})_{j=1}^d,\quad s \in \mathbb H_\rho,
\eeq
CNP Dirichlet series kernel $K_{\mf b, \mf n}$ can be written as 
\[
K_{\mf b, \mf n}(s,u) = \frac{1}{1-\sum_{j=1}^d b_jn_j^{-s-\overline{u}}}= \frac{1}{1-\langle f_{\mf b, \mf n} (s), f_{\mf b, \mf n}(u) \rangle},\quad s, u \in \mathbb H_\rho,
\]
that is, $K_{\mf b, \mf n}$ can be realized as a pullback of the Drury-Arveson kernel
\[\mathcal S_d(\z,\w)=\frac{1}{1-\langle \z,\w\rangle}, \quad \bm z,\bm w\in\mathbb B_d,
\]
on the unit ball $\mathbb B_d$ via the map $f_{\mf b, \mf n}$. 

From this point onward, we assume that $d$ is a finite integer with $d\geq2$ and restrict our attention to the following class of CNP Dirichlet series kernels, first considered in \cite{MS}. We adopt the following notation throughout: we use $\mathbb Z$ to denote the set of all integers, $\mathbb Z_+:=\{0,1,2,\ldots\}$ to denote the set of nonnegative integers, for an $n$-tuple of nonzero numbers $\bm \beta=(\beta_1,\ldots,\beta_n)$ and $\bm\alpha=(\alpha_1,\ldots,\alpha_n)\in\mathbb Z^n$, we write
\[
\bm \beta^{\bm\alpha}:=\prod_{i=1}^{n}\beta_i^{\alpha_i}.
\]
\begin{definition}
For a frequency data $\mf n = (n_1,n_2,\dots, n_d)$, $(d\ge 2)$, we say that $\mf n$ has {\it a generating subtuple}  if there exists an integer $1\le d' < d$ such that
\begin{itemize}
     \item [(i)] the set $\{\ln n_{i}: i=1,\dots,d'\}$ is linearly independent over $\mathbb Q$; and 
     \item [(ii)] for each $j= d'+1, \ldots, d$, there exists $\bm\alpha_j \in \mathbb Z_+^{d'}$ 
     such that 
\[
n_j =  (\mf n')^{\bm \alpha_j},
\]
where $\mf n' = (n_1,n_2,\dots, n_{d'})$.
 \end{itemize}
In this case,  $\mf n'$ is called a {\it generating subtuple} of $\mf n$.

Further, given a weight data $\mf b=(b_j)_{j=1}^d$, we say $K_{\mf b, \mf n}$ is an {\em integrally generated CNP Dirichlet series kernel} if its underlying frequency data \(\mf n\) has a generating subtuple. 
\end{definition}
For an integrally generated CNP Dirichlet series kernel $K_{\mf b, \mf n}$, the vectors $\bm \alpha_{j}$ appearing in (ii) are uniquely determined as the set $\{\ln n_{i}: i=1,\dots,d'\}$ is linearly independent over $\mathbb Q$. Also, since all the frequencies are distinct, $|\bm \alpha_j|=\sum_{i=1}^{d'}\alpha_{ij} \Ge 2$ for each $j= d'+1, \ldots, d$ and $\bm \alpha_j$'s are distinct. Let $\mf n=(n_j)_{j=1}^d$ be a finite frequency tuple whose logarithms are linearly dependent over $\mathbb Q$. After reordering its entries, we may choose an integer $1\leq d'<d$ such that $\{\ln n_i:i=1,\ldots,d'\}$ is linearly independent over $\mathbb Q$. Moreover, for each $j=d'+1,\ldots,d$, there exist a positive integer $t_j$ and a vector $\bm\alpha_j\in\mathbb Z^{d'}$ such that
\[
n_j^{t_j}=(\mf n')^{\bm\alpha_j},
\qquad
\mf n'=(n_1,\ldots,n_{d'}).
\]
Thus, among frequency tuples with rationally dependent logarithms, condition~\textup{(i)} can always be satisfied after reordering. The additional requirement lies in condition~\textup{(ii)}: the kernel $K_{\mf b,\mf n}$ is integrally generated if and only if the generating subtuple can be chosen so that $t_j=1$ and $\bm\alpha_j\in\mathbb Z_+^{d'}$ for every $j=d'+1,\ldots,d$.

Our primary object of study are the multiplier algebras associated with CNP Dirichlet series kernels. Let $\clh(K_{\mf b,\mf n})$ denote the reproducing kernel Hilbert space on $\mathbb H_\rho$ associated with kernel $K_{\mf b,\mf n}$. A function $\varphi:\mathbb H_\rho\to\mathbb C$ is called a \emph{multiplier} of $\clh(K_{\mf b,\mf n})$ if the multiplication operator
\[
M_\varphi:\clh(K_{\mf b,\mf n})\to\clh(K_{\mf b,\mf n}),
\qquad f\mapsto\varphi f,
\]
is well-defined and bounded. The collection of all such functions forms a unital Banach algebra, denoted by $\clm(\clh(K_{\mf b,\mf n}))$ and equipped with the multiplier norm
\[
\|\varphi\|_{\clm(\clh(K_{\mf b,\mf n}))}:=\|M_\varphi\|.
\]
Our goal is to classify these non-selfadjoint algebras up to isometric isomorphism. This problem is part of the broader study of the structure and classification of multiplier algebras; see, for example, \cite{DHS, DRS2011, DRS, Hartz_con, Hartz_1, KMS, Mc-1, RH, Sa-Sh, Watt}.

The geometric object central to our classification is the multiplier variety associated with $K_{\mf b,\mf n}$. It is the smallest multiplier variety in $\mathbb B_d$ containing $f_{\mf b,\mf n}(\mathbb H_\rho)$, namely
\beqn
V=\left\{\bm z\in\mathbb B_d:
g(\bm z)=0 \ \text{for every } g\in\mathcal M_d
\ \text{such that } g|_{f_{\mf b,\mf n}(\mathbb H_\rho)}\equiv0
\right\},
\eeqn
where $f_{\mf b, \mf n}$ is as in \eqref{f-at-s-intro}, and $\mathcal M_d$ is the multiplier algebra of the Drury-Arveson space over the unit ball $\mathbb B_d$. This construction is analogous to taking the Zariski closure, with polynomials replaced by the multiplier algebra $\mathcal M_d$. In \cite{ADS}, these multiplier varieties were explicitly described by polynomial equations determined by the arithmetic structure of the associated weight and frequency data; we recall this description in Theorem~\ref{variety-gen}. 
 For integrally generated CNP Dirichlet series kernels, the general description of the associated multiplier variety simplifies considerably. Specifically, Proposition~\ref{variety-form} identifies it as the intersection of the unit ball with the graph of a monomial map. This motivates the following definition.

\begin{definition}
Let $d\geq2$. A set $V\subseteq\mathbb B_d$ is called a \emph{monomial variety} if there exist an integer $1\leq d'<d$, positive coefficients $a_{d'+1},\ldots,a_d$, and distinct multi-indices
\[
\bm\alpha_j=(\alpha_{1j},\ldots,\alpha_{d'j})\in\mathbb Z_+^{d'},
\qquad j=d'+1,\ldots,d,
\]
with $|\bm\alpha_j|\geq2$, such that
\[
V=\mathbb B_d\cap\bigcap_{j=d'+1}^{d}
Z\!\left(z_j-a_j\prod_{i=1}^{d'}z_i^{\alpha_{ij}}\right),
\]
where $Z(p)$ denotes the zero set of a polynomial $p$. We refer to $z_1,\ldots,z_{d'}$ as the \emph{generating variables} and say that $V$ has $d'$ generating variables.
\end{definition}
Throughout this paper, the term \emph{monomial variety} refers exclusively to the class specified in the preceding definition. The condition $|\bm\alpha_j|\geq2$ is motivated by the structure of the multiplier varieties arising from integrally generated CNP Dirichlet series kernels. Indeed, Proposition~\ref{variety-form} shows that the multiplier variety associated with every such kernel $K_{\mf b,\mf n}$ is a monomial variety in this sense.

An isometric isomorphism between the multiplier algebras of two integrally generated CNP Dirichlet series kernels forces their weight and frequency data to have the same length \cite[Proposition~5.2]{ADS}. We therefore consider pairs of kernels $K_{\mf b,\mf n}$ and $K_{\mf c,\mf m}$ whose defining data have a common length $d\geq2$. Let $V,W\subseteq\mathbb B_d$ denote their associated multiplier varieties. The role of multiplier varieties in the present classification stems from the canonical isometric isomorphism 
\[
\mathcal M\bigl(\mathcal H(K_{\mf b,\mf n})\bigr)\cong\mathcal M_V,
\]
where $\mathcal M_V=\{g|_V:g\in\mathcal M_d\}$ is equipped with its multiplier norm; see \cite[Theorem~C]{Sa-Sh}.
Therefore, the algebras $\mathcal M(\mathcal H(K_{\mf b,\mf n}))$ and $\mathcal M(\mathcal H(K_{\mf c,\mf m}))$ are isometrically isomorphic if and only if $\mathcal M_V$ and $\mathcal M_W$ are isometrically isomorphic.
For finite $d$, the theorem of Davidson, Ramsey, and Shalit \cite[Theorems~4.5 and~5.9]{DRS} characterizes the latter condition by the existence of an automorphism of $\mathbb B_d$ mapping $V$ onto $W$. Thus the isometric isomorphism problem for integrally generated CNP Dirichlet series kernels reduces to the following concrete geometric question:\\

\noindent\textbf{Problem.} When are two monomial varieties $V,W\subseteq\mathbb B_d$ equivalent under an automorphism of the unit ball, and what restrictions does their monomial structure impose on an automorphism realizing this equivalence?

Our answer establishes rigidity both for the automorphisms realizing an equivalence and for the equivalence classes of the varieties themselves. The former must be unitary, while the latter are determined by coordinate permutations. The following theorem makes both assertions precise. The assumption that $V$ and $W$ have the same number of generating variables entails no loss of generality, since this number equals their complex dimension and is preserved by any biholomorphism. The notation $\mathcal U(n)$ denotes the set of all unitary matrices in $\mathcal B(\mathbb C^n)$.

\medskip
\noindent\textbf{Theorem~A}(Theorem~\ref{thm:existense of permutation} below)
\textit{Let $1\leq d'<d$, and let $V,W\subseteq\mathbb B_d$ be monomial varieties with $d'$ generating variables. Suppose that $\Phi$ is an automorphism of $\mathbb B_d$ satisfying $\Phi(V)=W$.
Then $\Phi$ is unitary: 
there exists $U\in\mathcal U(d)$ such that
\[
\Phi(\bs z)=U\bs z,\qquad \bs z\in\mathbb B_d.
\]
Moreover, there exists a permutation matrix $S\in\mathcal U(d)$ such that $S(V)=W$.}
\medskip

The first conclusion in Theorem A applies to every ball automorphism mapping $V$ onto $W$, whereas the second asserts the existence of a coordinate permutation realizing the same equivalence. The original automorphism $\Phi$ need not itself be a permutation. Consequently, the orbit of a monomial variety under $\operatorname{Aut}(\mathbb B_d)$ contains only finitely many distinct monomial varieties, each obtained by permuting the ambient coordinates. The proof of Theorem~A combines complex geometry with compact Lie group theory and proceeds in two steps. We first associate to each monomial variety a canonical bounded Reinhardt domain that is biholomorphic to the variety. The domain is strongly pseudoconvex with smooth boundary. This model allows us to apply the Wong--Rosay theorem, a result of D\textquotesingle Angelo \cite{DAngelo}, and results of Sunada on Reinhardt domains \cite{Sunada} to reduce equivalence under ball automorphisms to unitary equivalence. We then use the conjugacy of maximal tori in compact connected Lie groups to show that this unitary equivalence can be realized by a permutation of the ambient coordinates.

As an application of Theorem A, we obtain a complete classification, up to isometric isomorphism, of the multiplier algebras of integrally generated CNP Dirichlet series kernels. This classification captures both geometric and arithmetic rigidity: the isometric isomorphism class of the multiplier algebra determines the associated monomial variety up to a coordinate permutation and the defining weight and frequency data up to similarity of patterns and permutation. To state the result precisely, we first recall the necessary notation and terminology.
 Given a $d$-tuple of frequency data $\mf n = (n_j)_{j=1}^{d}$, define 
 \[
 \begin{array}{rl}
 &\mathcal J(\mf n):= \{ J\subseteq \{1, 2, \ldots, d\}: \{\ln n_j: j \in J\} \text{ is linearly dependent over}\ \mathbb Q \text{ and all of its} \\  & \hspace{3cm} \text{ proper subsets are linearly independent over}\ \mathbb Q\}.
 \end{array}
 \]
In \cite[Proposition~4.4]{ADS}, it is shown that corresponding to each $J\in \mathcal J(\mf n)$ there exist a unique partition of $J$ into nonempty sets $\{J_1, J_2\}$ and a unique $|J|$-tuple of positive integers $(\beta_j)_{j\in J}$ with $\text{gcd}(\beta_j)_{j\in J}=1$ such that
\begin{align*}
 \prod\limits_{i\in J_1}n_i^{\beta_i}=\prod\limits_{i\in J_2}n_i^{\beta_i}.
\end{align*}
This enables us to define the notion of pairs of weight and frequency data admitting similar pattern. 
\begin{definition}
Let $(\mf b, \mf n)$ and $(\mf c, \mf m)$ be two pairs of weight and frequency data of length $d\ge 2$.
We say that $(\mf b, \mf n)$ and $(\mf c, \mf m)$ admit a {\it similar pattern} if:
\begin{itemize}
\item[(i)] $\mathcal J(\mf n) = \mathcal J(\mf m)$, and
\item[(ii)] for each $J \in \mathcal J(\mf n)$, the unique partition $\{J_1, J_2\}$ and the unique $|J|$-tuple $(\beta_i)_{i \in J}$ associated with $J$ remain the same whether $J$ is viewed as an element of $\mathcal J(\mf n)$ or $\mathcal J(\mf m)$, and the following identity holds:
\[
\prod\limits_{i \in J_1}
 		c_i^{\beta_i}\prod\limits_{i \in J_2}  
   b_{i}^{\beta_i}=\prod\limits_{i \in J_2}c_i^{\beta_i}\prod\limits_{i \in J_1}b_i^{\beta_i}.
\]
\end{itemize}
\end{definition}
In \cite[Theorem~5.6]{ADS}, it was shown that if the pairs $(\mf b, \mf n)$ and $(\mf c, \mf m)$ admit a similar pattern, then the multiplier varieties associated with  CNP Dirichlet series kernels $K_{\mf b, \mf n}$ and $K_{\mf c, \mf m}$ are equal and the corresponding multiplier algebras are isometrically isomorphic. Interestingly, for integrally generated CNP Dirichlet series kernels, it turns out that the converse also holds, up to a permutation. For a permutation $\sigma$ of $\{1,\ldots, d\}$ and a $d$-tuple $\bm \alpha=(\alpha_1,\alpha_2,\ldots, \alpha_d)$ of numbers, we denote
\[\sigma(\bm \alpha)=(\alpha_{\sigma(1)}, \alpha_{\sigma(2)}, \dots, \alpha_{\sigma(d)}).\]   \\

\noindent 
\textbf{Theorem~B}(Theorem~\ref{rigidity-Isometric} below).
    \textit{Let $K_{\mf b, \mf n}$ and $K_{\mf c, \mf m}$ be two integrally generated CNP Dirichlet series kernels associated with the pairs $(\mf b, \mf n)$ and $(\mf c,\mf m)$ of weight and frequency data of length $d$. Then the following statements are equivalent.
    \begin{itemize}
\item [(i)] $\clm(\clh(K_{\mf b,\mf n}))$ and $\clm(\clh(K_{\mf c,\mf m}))$ are isometrically isomorphic.
\item [(ii)] There exists a permutation matrix $U\in\mathcal U(d)$ such that
        \[
        U(V)=W,
        \]
where $V$ and $W$ are the multiplier varieties associated with $K_{\mf b, \mf n}$ and $K_{\mf c, \mf m}$, respectively.
 \item [(iii)] There exists a permutation $\sigma$ of $\{1,2,\ldots,d\}$ such that $(\sigma (\mf b),\sigma (\mf n))$ and $(\mf c, \mf m)$ admit a similar pattern.
    \end{itemize}} 
\medskip

In particular, for monomial varieties $V,W\subseteq\mathbb B_d$ in the class considered here, we obtain
\[
\mathcal M_V\overset{\mathrm{isometric}}{\cong}\mathcal M_W
\quad\Longleftrightarrow\quad
U(V)=W
\quad\text{for some permutation matrix } U.
\]
Thus the multiplier algebra, equipped with its multiplier norm, determines the embedded monomial variety up to a permutation of the ambient coordinates. Condition~\textup{(iii)} expresses this geometric classification directly in terms of the defining weight and frequency data, yielding an explicit and verifiable arithmetic criterion for isometric isomorphism.
It identifies exactly the information retained by the isometric multiplier algebra: the coordinate-permutation class of the embedded variety, or equivalently the similarity-pattern class of the defining data up to permutation. In particular, the numerical values of the frequencies and the individual weights need not be recovered; the invariant is their pattern of relations and associated weight ratios. The essential geometric implication is \textup{(i)}$\Rightarrow$\textup{(ii)}. The theorem of Davidson, Ramsey, and Shalit supplies a ball automorphism between the associated varieties. These varieties being monomial varieties, Theorem~A shows that this automorphism must be unitary and that a coordinate permutation also maps one variety onto the other. The connection between the defining equations and the weight and frequency data yields the remaining classification.

Theorem~B extends the isometric classification obtained in \cite[Theorem~6.2]{ADS}.
The proof in \cite{ADS} passes from the associated multiplier varieties to the underlying Reinhardt domains. Then uses case-by-case calculations involving their defining polynomials and the biholomorphisms between them to show that the two varieties are related by a permutation of the ambient coordinates. As the number of dependent frequencies increases, these calculations become substantially more complicated and do not readily yield a unified classification. Theorem~A replaces this case analysis with a structural rigidity argument, yielding a uniform proof of the general classification and a conceptual alternative to the earlier proof in the single-dependent-frequency case.

Finally, we apply our classification to CNP kernels on bounded Reinhardt domains in several complex variables. Specifically, consider
\[
\mathcal S_{\mf b,\bm\alpha}(\bs z,\bs w)
=
\frac{1}{
1-\sum_{i=1}^{d'}b_i z_i\overline{w_i}
-\sum_{j=d'+1}^{d}b_j \bs z^{\bm\alpha_j}
\overline{\bs w}^{\bm\alpha_j}
},
\]
defined on
\[
\Omega_{\mf b,\bm\alpha}
=
\Big\{
\bs z\in\mathbb C^{d'}:
\sum_{i=1}^{d'}b_i|z_i|^2
+\sum_{j=d'+1}^{d}b_j|\bs z^{\bm\alpha_j}|^2<1
\Big\},
\]
where the weights $b_i$'s are positive and the multi-indices $\bm\alpha_j\in\mathbb Z_+^{d'}$ are pairwise distinct. We show that the multiplier algebras associated with $\cls_{\mf b,\bm\alpha}$ and $\cls_{\mf c,\bm\beta}$ are isometrically isomorphic if and only if there exist permutations $\sigma$ of $\{1,\ldots,d'\}$ and $\sigma'$ of $\{d'+1,\ldots,d\}$ such that
\[
\sigma(\bm\alpha_{\sigma'(j)})=\bm\beta_j,
\qquad
(\mf c')^{\bm\beta_j}b_{\sigma'(j)}
=
\sigma(\mf b')^{\bm\beta_j}c_j,
\qquad j=d'+1,\ldots,d,
\]
where $\mf b'=(b_1,\ldots,b_{d'})$ and $\mf c'=(c_1,\ldots,c_{d'})$. This gives an explicit classification in terms of the monomial exponents and weights. The result follows by transferring Theorem~B through the correspondence established in \cite[Section~7]{ADS}, which preserves the associated multiplier varieties, and extends the single-monomial case of \cite[Theorem~7.3]{ADS}; see Theorem~\ref{rigidity-isometric-classical}.

The paper is organized as follows. In the next section, we prove the geometric rigidity theorem, Theorem~A. In the final section, we establish the classification theorem, Theorem~B, and apply it to multiplier algebras of CNP kernels on bounded Reinhardt domains.

\section{A geometric rigidity result on monomial varieties}
For the present purpose, a monomial variety $V \subseteq \mathbb B_d$ with $d'$ generating variables is part of a variety in $\mathbb C^d$ of the form 
\begin{equation}\label{monomial variety}
V=\big\{\bs z\in \mathbb B_d:z_j=B_j \prod_{i=1}^{d'}z_i^{\alpha_{ij}},\ j = d'+1, \ldots, d\big\},
\end{equation}
where $B_j\in (0, \infty)$, and  $\bm \alpha_j =(\alpha_{1j},\alpha_{2j},\dots,\alpha_{d'j})\in \mathbb Z_+^{d'}$ for all $j = d'+1, \ldots, d$ with $|\bm \alpha_j| \Ge 2$ are distinct. Each of these monomial varieties is also a graph over a domain 
\begin{equation} \label{graph-domain}
\widetilde{V}:=\Big\{\bs z\in \mathbb C^{d'}: \sum_{j=1}^{d'}|z_j|^2+\sum_{j=d'+1}^d|B_j|^2|\bs z^{\bm \alpha_j}|^2 < 1\Big\}.
\end{equation}
Indeed, the monomial variety can also be expressed as 
\[
V=\{(\bm z, f(\bm z)): \bm z\in \widetilde{V}\},\ \text{where } 
f(\bm z)= (B_{d'+1}\bm z^{\bm\alpha_{d'+1}},\dots, B_d\bm z^{\bm\alpha_d}), \quad \bm z\in \mathbb C^{d'}.
\]
For such a reason, it is also appropriate to call these monomial varieties as monomial graph varieties. Being a graph over $\widetilde{V}$, the variety $V$ is biholomorphic to $\widetilde{V}$, via the canonical inclusion
\begin{equation}\label{F-map}
F_{\text v}(\bs z)=(z_1,\ldots,z_{d'},B_{d'+1}\bs z^{\bm \alpha_{d'+1}},\ldots,B_d\bs z^{\bm \alpha_d}),\quad \bs z=(z_1,\ldots,z_{d'})\in\widetilde{V}
\end{equation}
Therefore, these associated domains will be very crucial in our study of monomial varieties. Before proceeding further, we record various properties of $\widetilde{V}$.  The domain $\widetilde{V}$ is obviously a complete Reinhardt domain in $\mathbb C^{d'}$. 
We refer to $\widetilde{V}$ as the Reinhardt domain associated to $V$.

 Given  a domain $\Omega \subseteq  \mathbb C^n$, we sometimes use the identification $\mathbb C^n\cong \mathbb R^{2n}$ via $z_j=x_j+iy_j$.
\begin{definition} 
Let $n \in \mathbb N$ and $k \in \mathbb N \cup \{\infty\}$. A domain $\Omega\subset\mathbb C^n$ is said to have {\it $C^k$ boundary} at a point $p\in \partial\Omega$ if there exist an open set $U\subset\mathbb R^{2n}$ containing $p$ and a $k$ times continuously differentiable function $\rho:U\to\mathbb R$ such that
    \begin{itemize}
        \item[(i)] $U\cap\Omega=\{\bs z\in U:\rho(\bs z)<0\}$,
        \item[(ii)] the gradient $\nabla\rho(\bs z)\neq 0$, for all $\bs z\in U$.
    \end{itemize} 
 We say $\Omega$ has $C^k$ boundary if it has $C^k$ boundary for every $p\in\partial\Omega$. The function $\rho$ is called {\it a local defining function} for $\Omega$ at $p$. If $U$ is a neighbourhood of $\partial\Omega$ and $\rho:U\to\mathbb R$ a $k$ times continuously differentiable function on $U$ satisfying (i) and (ii) above, then $\rho$ is called {\it a defining function for $\Omega$}.
\end{definition}
It is well known that if $\Omega \subseteq \mathbb C^n$ is a bounded domain with $C^k$ boundary, then there is a defining function for $\Omega$, see \cite[Lemma~2.2.6]{Range}. We also need the notion of complex tangent space to define pseudoconvexity of a domain.
\begin{definition}
    Let $\Omega$ be a domain in $\mathbb C^n$, and let $\rho$ is a local defining function for $\Omega$ at $p\in \partial\Omega$. The {\it complex tangent space} to $\partial\Omega$ at $p$ is defined by
    \[
    T_p^\mathbb C(\partial\Omega):=\Big\{(w_1,w_2,\dots,w_n)\in\mathbb C^n:\sum_{j=1}^nw_j\frac{\partial\rho}{\partial z_j}(p)=0\Big\}
    \]
\end{definition}

We next present the notion of pseudoconvexity of a domain in $\mathbb C^n$, see \cite[Chapter 2]{Range} for more details.
\begin{definition}
    A domain $\Omega$ in $\mathbb C^n$ with $C^2$ boundary is said to be {\it pseudoconvex or Levi pseudoconvex}  if for every $p\in\partial\Omega$, there exists a $C^2$ local defining function $\rho$ for $\Omega$ at $p$ such that the Levi form
    \[
    L_p(\rho;\bs w):=\sum_{j,k=1}^nw_j\overline{w}_k\frac{\partial^2\rho}{\partial z_j\partial \overline{z}_k}(p)\Ge 0\quad\text{for all}\ \bs w=(w_1,w_2,\dots,w_n)\in T_p^\mathbb C(\partial \Omega).
    \]
    The domain $\Omega$ is called {\it strongly pseudoconvex} if $ L_p(\rho;\bs w)>0$ for all $\bs w\in T_p^\mathbb C(\partial \Omega) \setminus \{0\}$.
\end{definition}


 In the following lemma, we record some further properties of the domain $\widetilde{V}$. We fix the collection $\{e_i\}_{i=1}^n$ to denote the standard orthonormal basis of $\mathbb C^n$.

\begin{lemma}
Let $V \subseteq \mathbb B_d$ be a monomial variety with $d'$ generating variables, and let $\widetilde{V}$ be the Reinhardt domain associated to $V$, as in \eqref{graph-domain}. Then, $\widetilde{V} \subseteq \mathbb C^{d'}$ is a strongly pseudoconvex Reinhardt domain with smooth boundary.
    %
\end{lemma}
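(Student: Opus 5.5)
Write $\rho(\bs z)=\sum_{i=1}^{d'}|z_i|^2+\sum_{j=d'+1}^{d}B_j^2|\bs z^{\bm\alpha_j}|^2-1$, a real polynomial on $\mathbb C^{d'}$ with $\widetilde V=\{\rho<0\}$. The plan is to verify in turn that $\widetilde V$ is a bounded complete Reinhardt domain, that $\nabla\rho\neq0$ on $\partial\widetilde V=\{\rho=0\}$, and that the complex Hessian of $\rho$ is positive definite everywhere. Then $\rho$ is a global $C^\infty$ defining function, and strong pseudoconvexity follows at once because the Levi form is positive on all of $\mathbb C^{d'}\setminus\{0\}$, hence on every complex tangent space.

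\emph{Reinhardt property and boundedness.} Since $\rho$ depends only on $|z_1|,\ldots,|z_{d'}|$ and is nondecreasing in each $|z_i|$, the set $\widetilde V$ is invariant under the torus action and complete circular in each variable. It is connected, since it is star-shaped about $0$: along rays, $t\mapsto\rho(t\bs z)$ is increasing for $t\ge0$. Boundedness follows from $\widetilde V\subseteq\{\sum|z_i|^2<1\}$.

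\emph{Nonvanishing gradient.} I will use the Euler-type identity. Writing $R=\sum_i \bigl(z_i\partial_{z_i}+\bar z_i\partial_{\bar z_i}\bigr)$, each term $|\bs z^{\bm\alpha_j}|^2$ is homogeneous of real degree $2|\bm\alpha_j|$, so
\[
R(\rho+1)=2\sum_{i}|z_i|^2+\sum_{j}2|\bm\alpha_j|\,B_j^2|\bs z^{\bm\alpha_j}|^2\;\Ge\;2(\rho+1).
\]
On $\{\rho=0\}$ this gives $R\rho\Ge 2>0$, so $\nabla\rho\neq0$ there, and by continuity also on a neighbourhood $U$ of $\partial\widetilde V$. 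This makes $\rho|_U$ a smooth defining function, and it also shows $\partial\widetilde V=\{\rho=0\}$ with $\widetilde V\cap U=\{\rho<0\}\cap U$.

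\emph{Levi form.} This is the step that needs some care. I will write $\rho=|\bs z|^2+\sum_j B_j^2|g_j|^2-1$ with $g_j=\bs z^{\bm\alpha_j}$ holomorphic. For holomorphic $g$, $\partial\bar\partial|g|^2=\partial g\wedge\overline{\partial g}$, so
\[
L_p(\rho;\bs w)=|\bs w|^2+\sum_j B_j^2\Bigl|\sum_k w_k\,\partial_k g_j(p)\Bigr|^2\Ge|\bs w|^2 .
\]
Thus $\rho$ is strictly plurisubharmonic, and $L_p(\rho;\bs w)>0$ for all $\bs w\neq0$, in particular for $\bs w\in T_p^{\mathbb C}(\partial\widetilde V)\setminus\{0\}$.

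The only genuine obstacle is the nondegeneracy of the gradient at boundary points lying on coordinate hyperplanes, where the monomials may be singular. The radial-derivative estimate above handles all points uniformly, so no case split is needed. The condition $|\bm\alpha_j|\Ge2$ is not even required; only $\bm\alpha_j\in\mathbb Z_+^{d'}$ is used.
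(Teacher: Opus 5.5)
Your proof is correct and is essentially the paper's argument: you use the same global defining function $\rho$ and the same Levi-form computation, showing that the complex Hessian of $\rho$ is $I_{d'}$ plus positive semidefinite terms coming from the holomorphic monomials $\bs z^{\bm\alpha_j}$, hence positive definite on all of $\mathbb C^{d'}$. The only difference is the gradient step: you use the radial inequality $R\rho\Ge 2$ on $\{\rho=0\}$, whereas the paper computes $\partial\rho/\partial x_i=2x_i\bigl(1+\sum_{j}\alpha_{ij}B_j^2|\bs z^{\bm\alpha_j-e_i}|^2\bigr)$ (and likewise for $y_i$) to see that $\nabla\rho$ vanishes only at the origin; note that your inequality $R(\rho+1)\Ge 2(\rho+1)$ does use $|\bm\alpha_j|\Ge 1$, so your closing claim that merely $\bm\alpha_j\in\mathbb Z_+^{d'}$ suffices is slightly overstated.
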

\begin{proof}
Let $V$ be a monomial variety as in ~\eqref{monomial variety}. It then follows from ~\eqref{graph-domain} that 
$\widetilde{V}=\{\bs z\in \mathbb C^{d'}: \rho(\bs z)<0\},
$
where
$\rho:\mathbb C^{d'}\to\mathbb R$ is given by
$$\rho(\bs z) =-1+\sum_{j=1}^{d'}|z_j|^2+\sum_{j=d'+1}^d|B_j|^2|\bs z^{\bm \alpha_j}|^2, \quad \bs z = (z_1, \ldots, z_{d'}) \in \mathbb C^{d'}.$$
Since 
\(
\rho(z_1,z_2,\ldots,z_{d'})= \rho(e^{i\theta_1}z_1,e^{i\theta_2}z_2,\ldots,e^{i\theta_{d'}}z_{d'}) ~\text{for all}\ \theta_i\in\mathbb R,
\)
it follows that $\widetilde{V}$ is a Reinhardt domain. Now, we show that $\widetilde{V}$ has smooth boundary. To this end, by identifying  $z_j=x_j+iy_j$ and viewing $\widetilde{V}$ as a domain in $\mathbb R^{2d'}$, if we take $U=\mathbb R^{2d'}\setminus\{0\}$, then 
\[
U\cap\widetilde{V}=\{\bs z\in U: \rho(\bs z)<0\}
\]
and for all $i = 1, 2, \ldots, d'$,
\[
\frac{\partial \rho}{\partial x_i}=2x_i\Big(1+\sum_{j=d'+1}^d{\alpha_{ij}}|B_j|^2|\bs z^{\bm \alpha_j-e_i}|^2\Big),\quad \frac{\partial \rho}{\partial y_i}=2y_i\Big(1+\sum_{j=d'+1}^d\alpha_{ij}|B_j|^2|\bs z^{\bm \alpha_j-e_i}|^2\Big),
\]
with the convention that $\bs z^{\bm \alpha} = 0$ if at least one of the coordinates of $\bm \alpha$ is negative.
This shows that the gradient $\nabla\rho(\bs z)=0$ if and only if $\bs z=0$. Since $\rho$ is a smooth function, we then conclude that $\rho$ is a defining function for $\widetilde{V}$; equivalently, $\widetilde{V}$ has smooth boundary. 
Moreover, for $\bs p\in\partial\widetilde{V}$
\[
\Big(\frac{\partial^2}{\partial z_i\partial \overline{z}_j}(-1+\sum_{k=1}^{d'}|z_k|^2)(\bs p)\Big)_{i,j=1}^{d'}=  I_{d'}.
\]
Also, for all $k=d'+1, \ldots, d$, 
\[
\Big(\frac{\partial^2}{\partial z_i\partial \overline{z}_j}(|B_k|^2|\bs z^{\bm \alpha_k}|^2)(\bs p)\Big)_{i,j=1}^{d'}=|B_k|^2\Big(\langle \alpha_{ik}\bs p^{ \alpha_k-e_i}, \alpha_{jk} \bs p^{\bm \alpha_k-e_j} \rangle\Big)_{i,j=1}^{d'}
\]
is a positive semi-definite matrix. Therefore,
\[
\Big(\frac{\partial^2 \rho}{\partial z_i\partial \overline{z}_j }(\bs p)\Big)_{i,j=1}^{d'}= I_{d'}+\sum_{k=d'+1}^d\Big(\frac{\partial^2}{\partial z_i\partial \overline{z}_j}(|B_k|^2|\bs z^{\bm \alpha_k}|^2) (\bs p)\Big)_{i,j=1}^{d'}
\]
is positive definite.
Hence, the Levi form $ L_{\bs p}(\rho;\bs w)>0$ for all nonzero vector $\bs w$ in the complex tangent space $T_{\bs p}^\mathbb C(\partial \widetilde{V})$, proving that $\widetilde{V}$ is strongly pseudoconvex. This completes the proof.
\end{proof}

Let $\Omega$ be a bounded strongly pseudoconvex Reinhardt domain containing the origin. If $\Omega$ is biholomorphic to the unit ball in $\mathbb C^d$, then the group of automorphisms $\text{Aut}(\Omega)$ is not compact and it acts on $\Omega$ transitively, as this is the case for the unit ball. On the other hand, if $\Omega$ is not biholomorphic to the unit ball, then by the Wong-Rosay Theorem (see, \cite[Main Theorem]{Wong}), $\text{Aut}(\Omega)$ is compact, and in this case we show below that the automorphism group has a unique fixed point namely the origin. 
\begin{lemma}\label{lemma:fixed point is origin}
Let $\Omega$ be a bounded strongly pseudoconvex Reinhardt domain in $\mathbb C^d$ containing the origin. If $\Omega$ is not biholomorphic to $\mathbb B_d$, then the origin is the unique fixed point for $\text{Aut}(\Omega)$. 
\end{lemma}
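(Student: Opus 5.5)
The proof has two parts: the origin is fixed by every automorphism, and no other point is fixed by all of $\text{Aut}(\Omega)$. Uniqueness is the easy part and I would do it first. Because $\Omega$ is Reinhardt, every rotation $R_{\bm\theta}(\bs z)=(e^{i\theta_1}z_1,\ldots,e^{i\theta_d}z_d)$ lies in $\text{Aut}(\Omega)$, so the torus $\mathbb T^d$ is a subgroup of $\text{Aut}(\Omega)$. Let $\bs p$ be fixed by all of $\text{Aut}(\Omega)$. If some coordinate $p_k\neq 0$, the rotation by $e^{i\theta}$ in the $k$-th variable moves $\bs p$ for any $\theta\notin 2\pi\mathbb Z$. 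Hence $\bs p=0$. The real content is existence, i.e.\ that every $g\in\text{Aut}(\Omega)$ satisfies $g(0)=0$.

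For existence I would first use the Wong--Rosay theorem \cite{Wong}, as in the paragraph before the lemma. Since $\Omega$ is bounded, strongly pseudoconvex with smooth boundary, and not biholomorphic to $\mathbb B_d$, the group $\text{Aut}(\Omega)$ is compact. It is then a compact Lie group acting on $\Omega$ by Cartan's theorem. Next I would invoke Sunada's structure theorem for automorphisms of bounded Reinhardt domains containing the origin \cite{Sunada}, which has two parts. First, $\text{Aut}(\Omega)$ is generated by its identity component $\text{Aut}_0(\Omega)$ together with linear maps of the form $\bs z\mapsto (r_1z_{\sigma(1)},\ldots,r_dz_{\sigma(d)})$. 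Second, $\text{Aut}_0(\Omega)$ is obtained as follows: after a permutation of coordinates, $\Omega$ splits into blocks of variables, and on each block $\text{Aut}_0(\Omega)$ acts either by the torus or by generalized M\"obius transformations of ball type. The latter occur only when $\Omega$ has a genuine ball-like block, i.e.\ a normalized form in which some group of variables $\bs z_{(1)}$ enters through $|\bs z_{(1)}|^2$ in the way that produces non-linear automorphisms. Granting this, it suffices to show that no ball-type block occurs. Then $\text{Aut}_0(\Omega)=\mathbb T^d$, every automorphism is linear, and in particular each one fixes $0$.

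To rule out ball-type blocks I would argue by compactness. The ball-type automorphisms of Sunada's normal form come in non-compact one-parameter families, built from the hyperbolic automorphisms of a ball factor. Any such block would therefore give a non-compact subgroup of $\text{Aut}(\Omega)$. This contradicts the compactness from the Wong--Rosay theorem, so only the torus part survives in $\text{Aut}_0(\Omega)$. If needed, I could add a check that strong pseudoconvexity with smooth boundary is compatible with this reduction. D'Angelo's results \cite{DAngelo} on the local geometry of such boundaries would be used to exclude degenerate normal forms; this is where I expect those results to enter.

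The main obstacle is this middle step: stating Sunada's theorem precisely enough to conclude that compactness of $\text{Aut}(\Omega)$ forces $\text{Aut}_0(\Omega)=\mathbb T^d$, and hence that all automorphisms are linear. If Sunada's description proves awkward to quote, I would fall back on a Lie-theoretic route. Let $K=\text{Aut}(\Omega)$, which is compact. Enlarge $\mathbb T^d$ to a maximal torus $T'$ of $K_0$; the fixed-point set of $T'$ lies inside that of $\mathbb T^d$, which is $\{0\}$, so $T'$ fixes $0$. I would then try to show that the orbit $K_0\cdot 0$ is a point. The idea is to combine the conjugacy of maximal tori with Cartan's uniqueness theorem on circular domains, which says that automorphisms fixing $0$ are linear. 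The aim is to prove that the isotropy group of $0$ has full rank and exhausts $K_0$.
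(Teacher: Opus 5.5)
Your main line is correct, and your uniqueness argument is exactly the paper's. For existence, you and the paper both get compactness of $\text{Aut}(\Omega)$ from Wong--Rosay and then rely on Sunada, but you handle the non-identity components differently.

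The paper uses only an identity-component statement from Sunada (Corollary~2): the $\text{Aut}^\circ(\Omega)$-orbit of $0$ is compact, hence equals $\{0\}$. It then reaches all of $\text{Aut}(\Omega)$ by a normality argument. Since $\text{Aut}^\circ(\Omega)$ is normal and contains the torus, $g^{-1}R_\Theta g\in\text{Aut}^\circ(\Omega)$ fixes $0$ for every $g$. So $g(0)$ is a torus-fixed point, i.e.\ $g(0)=0$.

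You instead use the stronger part of Sunada's structure theorem: $\text{Aut}(\Omega)$ is generated by $\text{Aut}^\circ(\Omega)$ and the elementary maps $z\mapsto(r_1z_{\sigma(1)},\ldots,r_dz_{\sigma(d)})$. This gives linearity of every automorphism immediately, but it uses more of the classification than needed. The paper's trick needs no information about the component group, and linearity then follows anyway from Cartan's theorem on circular domains.

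A few points to fix.

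First, compactness does not force $\text{Aut}^\circ(\Omega)=\mathbb T^d$. For $\Omega=\{|z_1|^2+|z_2|^2+|z_3|^2+|z_3|^4<1\}$, which is even one of the paper's domains $\widetilde V$, we have $\text{Aut}^\circ(\Omega)\supseteq\mathcal U(2)\times\mathbb T$. In Sunada's description the non-ball blocks carry unitary groups, not just tori. What excluding ball-type blocks actually shows is that $\text{Aut}^\circ(\Omega)$ consists of block-unitary linear maps, and that is all you use.

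Second, D'Angelo's result plays no role in this lemma. The paper needs it only in Case~II of Proposition~\ref{prop:unitary-representation}.

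Third, your fallback route has a genuine gap. A closed subgroup containing a maximal torus need not be all of $K_0$: for example $T\subset SU(2)$ has full rank, yet $SU(2)/T\cong S^2$. So showing the isotropy group has full rank proves nothing by itself. Also, $T'$ fixes $0$ because it commutes with $\mathbb T^d$ and therefore preserves $\mathrm{Fix}(\mathbb T^d)=\{0\}$. An inclusion of fixed-point sets alone does not give a fixed point.

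One way to close the fallback:
\begin{itemize}
\item $T_0(K_0\cdot 0)$ is a $\mathbb T^d$-invariant real subspace of $\mathbb C^d$, hence a sum of coordinate lines $\mathbb C e_i$, so it is a complex subspace.
\item By homogeneity under holomorphic maps, every tangent space of $K_0\cdot 0$ is complex. So $K_0\cdot 0$ is a compact connected complex submanifold of $\mathbb C^d$, i.e.\ a point.
\end{itemize}
Combined with the paper's normality trick, this would even remove the dependence on Sunada.
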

\begin{proof}
As noted earlier, by the Wong-Rosay Theorem, $\text{Aut}(\Omega)$ is compact, and so is the identity component of $\text{Aut}(\Omega)$, denoted by $\text{Aut}^\circ(\Omega)$. 
Let 
\[
\mathcal G_0 : = \{g(0) : g \in \text{Aut}^\circ(\Omega)\}
\]
be the $\text{Aut}^\circ(\Omega)$-orbit of the origin. Then $\mathcal G_0$ is compact. Moreover, since $\Omega$ is a bounded Reinhardt domain containing the origin, an application of the classification result \cite[Corollary 2]{Sunada} of T. Sunada, compactness of $\mathcal G_0$ forces that  
    $
    \mathcal G_0 = \{0\}.
    $
Consequently, origin is a fixed point of $\text{Aut}^\circ(\Omega)$. Note that the origin is the only fixed point of the torus group 
\begin{equation*}
    R_\Theta:=\text{diag}(e^{i\theta_1}, e^{i\theta_2}, \ldots,e^{i\theta_d})\in\text{Aut}^\circ(\Omega) \quad \text{for all}\ \Theta=(\theta_1,\theta_2,\dots,\theta_d)\in \mathbb R^d.
    \end{equation*}
Since $\text{Aut}^\circ(\Omega)$ is a normal subgroup of $\text{Aut}(\Omega)$, then for any $g\in \text{Aut}(\Omega)$ and for any element $R_{\Theta}$ of the torus group we have 
\[
g^{-1}\circ R_\Theta\circ g\in\text{Aut}^\circ(\Omega), \text{ and hence, } g^{-1}\circ R_\Theta\circ g(0)=0.
\]
This shows that $g(0)$ is a fixed point of $R_{\Theta}$ for all $\Theta\in \mathbb R^d$. Thus, by the unique fixed point property of the torus group, $g(0)=0$ for all $g\in \text{Aut}(\Omega)$. Since any fixed point of $\text{Aut}(\Omega)$ is also a fixed point of the torus group of automorphisms, the unique fixed point property also follows. This completes the proof. 
\end{proof}

\subsection{Rigidity of automorphisms which takes one monomial variety onto another:} Isometric isomorphisms of multiplier algebras are determined by biholomorphisms between their multiplier varieties induced by automorphisms of the unit ball. Therefore, to address the isometric isomorphism problem, one needs to determine precisely which monomial varieties are biholomorphic via automorphisms of the unit ball. The automorphism group of the unit ball is completely classified, and it turns out that not every automorphism of the unit ball can induce a biholomorphism between monomial varieties. The structure of monomial varieties imposes a certain rigidity, which we investigate next. Specifically, we show that if an automorphism of the unit ball maps one monomial variety onto another, then the automorphism must necessarily be linear.   

\begin{proposition}\label{prop:unitary-representation}
Let $V, W \subseteq \mathbb B_d$ be two monomial varieties with $d'$ generating variables. 
If $\Phi$ is  an automorphism of $\mathbb B_d$ such that $\Phi(V)=W$, then $\Phi$ is a unitary map on $\mathbb C^d$. 

Furthermore, $\Phi$ has the block matrix form
 \[
 \Phi=\begin{bmatrix}
     A & 0\\
     0 & D
 \end{bmatrix},
 \]
 with respect to the decomposition $\mathbb C^{d'}\oplus \mathbb C^{d-d'}$.
\end{proposition}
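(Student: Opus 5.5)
The plan is to show that $\Phi(0)=0$. Once that is known, Cartan's theorem (an automorphism of $\mathbb B_d$ fixing the origin is unitary) gives the first assertion, and a tangent-space argument gives the block form. To locate $\Phi(0)$ I will pass to the associated Reinhardt domains. Let $\pi:\mathbb C^d\to\mathbb C^{d'}$ be the projection onto the first $d'$ coordinates. Then $F_{\text v}:\widetilde V\to V$ and $\pi|_W:W\to\widetilde W$ are biholomorphisms, by \eqref{F-map} and the graph description. Hence
\[
\psi:=\pi\circ\Phi\circ F_{\text v}:\widetilde V\to\widetilde W
\]
is a biholomorphism between two bounded strongly pseudoconvex complete Reinhardt domains containing the origin, by the lemma above. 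Since $F_{\text v}(0)=0$ and $F_{\text w}(0)=0$, the identity $\Phi(0)=F_{\text w}(\psi(0))$ shows it suffices to prove $\psi(0)=0$.

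First, I will show that $\widetilde V$ is not biholomorphic to $\mathbb B_{d'}$. Sunada's results on bounded Reinhardt domains containing the origin \cite{Sunada} say that a biholomorphism between two such domains can be normalized, after composing with automorphisms, to a map of the form ``permutation times diagonal scaling''. So if $\widetilde V\cong\mathbb B_{d'}$, then $\widetilde V$ would equal an ellipsoid $\{\sum c_i|z_i|^2<1\}$. This is impossible because $d>d'$, so at least one term $B_j^2|\bs z^{\bm\alpha_j}|^2$ with $|\bm\alpha_j|\ge2$ is present. Concretely, I will take a direction $\bs u$ in the support of $\bm\alpha_j$ and compare the Minkowski functionals along the ray $t\bs u$. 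For the ellipsoid, the boundary condition is a pure quadratic $t^2\sum c_i|u_i|^2=1$. For $\widetilde V$ it is $t^2\sum|u_i|^2+\sum_j B_j^2t^{2|\bm\alpha_j|}|\bs u^{\bm\alpha_j}|^2=1$, which contains higher powers of $t$. Varying the length of $\bs u$ while keeping its direction then gives a contradiction. The same argument applies to $\widetilde W$.

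Second, by Lemma~\ref{lemma:fixed point is origin}, every automorphism of $\widetilde V$ fixes $0$. By Sunada's theorem again, since $\widetilde V\cong\widetilde W$ there is a linear biholomorphism $L:\widetilde V\to\widetilde W$ of permutation–diagonal form. Then $L^{-1}\circ\psi\in\operatorname{Aut}(\widetilde V)$ fixes $0$, so $\psi(0)=L(0)=0$. Hence $\Phi(0)=0$, and therefore $\Phi$ is a unitary $U$. I expect this step to be the main obstacle: it depends on extracting the correct normal form from Sunada, in particular the existence of a linear equivalence between biholomorphic bounded Reinhardt domains containing $0$. If that form is unavailable, my fallback is an orbit argument. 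I would transport the torus action of $\widetilde W$ to $\widetilde V$ via $\psi$, obtaining a maximal torus in the compact group $\operatorname{Aut}(\widetilde V)$. Conjugacy of maximal tori then shows that it fixes only $0$, and hence that $\psi^{-1}(0)=0$.

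Finally, for the block form, $U$ is linear and maps $V$ onto $W$, so it maps the tangent space $T_0V$ onto $T_0W$. Because $|\bm\alpha_j|\ge2$, each defining function $z_j-B_j\bs z^{\bm\alpha_j}$ has differential $dz_j$ at $0$. Hence $T_0V=T_0W=\mathbb C^{d'}\oplus0$. Thus $U$ leaves $\mathbb C^{d'}\oplus0$ invariant, and since $U$ is unitary it also leaves the orthogonal complement $0\oplus\mathbb C^{d-d'}$ invariant. This yields $\Phi=\begin{bmatrix}A&0\\0&D\end{bmatrix}$.
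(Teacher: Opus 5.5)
\textbf{Gap in your first step.} Your claim that $\widetilde V$ is never biholomorphic to $\mathbb B_{d'}$ is false.
\begin{itemize}
\item If $d'=1$, then $\widetilde V=\{z\in\mathbb C: |z|^2+\sum_j B_j^2|z|^{2\alpha_j}<1\}$ is always a disc $\{|z|<r_0\}$, because the left side is strictly increasing in $|z|$.
\item It also happens for $d'\ge 2$. Take the monomial variety $V=\{\bs z\in\mathbb B_5: z_3=z_1^2,\ z_4=\sqrt2\,z_1z_2,\ z_5=z_2^2\}$, which has distinct exponents $(2,0),(1,1),(0,2)$ and positive coefficients. Then $\widetilde V=\{\bs z\in\mathbb C^2:\|\bs z\|^2+\|\bs z\|^4<1\}$, which is a ball.
\end{itemize}
Your ray argument cannot detect this. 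Along a single ray $t\bs u$, the defining equation has exactly one positive root, and rescaling $\bs u$ only reparametrizes the same boundary point. A contradiction would have to come from comparing different directions, and the example shows that none exists in general.

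\textbf{Consequence for your second step.} The case $\widetilde V\cong\mathbb B_{d'}$ (the paper's Case II) cannot be dismissed, and your second step does not cover it. Lemma~\ref{lemma:fixed point is origin} is unavailable there, and necessarily so: $\operatorname{Aut}(\mathbb B_{d'})$ is transitive, so nothing intrinsic to $\widetilde V$ forces $\psi(0)=0$. What is missing is an argument that uses how $V$ sits inside $\mathbb B_d$.

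\textbf{How the paper handles Case II.} It uses Sunada's theorem to choose linear biholomorphisms $\phi_{\text v}:\mathbb B_{d'}\to\widetilde V$ and $\phi_{\text w}:\mathbb B_{d'}\to\widetilde W$ fixing $0$. Then $f_{\text v}=F_{\text v}\circ\phi_{\text v}$ and $f_{\text w}=F_{\text w}\circ\phi_{\text w}$ are proper polynomial maps $\mathbb B_{d'}\to\mathbb B_d$ vanishing at $0$. They satisfy $\Phi\circ f_{\text v}=f_{\text w}\circ\Psi$ with $\Psi=\phi_{\text w}^{-1}\circ\psi\circ\phi_{\text v}\in\operatorname{Aut}(\mathbb B_{d'})$. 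D'Angelo's rigidity theorem for polynomial proper maps between balls then forces $\Phi$ to be unitary. Some extrinsic input of this kind is unavoidable.

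\textbf{What is fine.} Your treatment of the non-ball case is valid. Composing $\psi$ with Sunada's linear equivalence $L$ and applying Lemma~\ref{lemma:fixed point is origin} to $L^{-1}\circ\psi$ is a minor variant of the paper's torus argument. Your tangent-space argument for the block form is also correct. You rightly add that unitarity makes the orthogonal complement $0\oplus\mathbb C^{d-d'}$ invariant as well, a point the paper leaves implicit.
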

\begin{proof}
Let $\Phi$ be an automorphism of $\mathbb B_d$ such that $\Phi(V)=W$. Then the map  
\begin{equation} \label{biholomorphism-varphi}
     \varphi:=F_{\text{w}}^{-1}\circ\Phi\circ F_{\text{v}} 
\end{equation}
is a biholomorphism between the associated bounded strongly pseudoconvex Reinhardt domains $\widetilde{V}$ and $\widetilde{W}$ in $\mathbb C^{d'}$, where $F_{\text{v}}: \widetilde{V} \to V$ and $F_{\text{w}}: \widetilde{W}\to W$ are the biholomorphism as defined in \eqref{F-map}. 
We show that $\Phi$ is unitary in each of the following two exhaustive cases.  

\textbf{Case I.} Suppose that the domain $\widetilde{V}$ is not biholomorphic to $\mathbb B_{d'}$. Then, by \eqref{biholomorphism-varphi}, $\widetilde{W}$ is also not biholomorphic to $\mathbb B_{d'}$.
Since $\varphi^{-1} \circ R_\Theta\circ \varphi\in \text{Aut}(\widetilde{V}) $,  then by Lemma~\ref{lemma:fixed point is origin},
     \[\varphi^{-1} \circ R_\Theta\circ \varphi(0)=0,\]
for all $\Theta=(\theta_1,\theta_2,\dots,\theta_{d'})\in \mathbb R^{d'}$, where 
$
 R_\Theta:=\text{diag}(e^{i\theta_1},e^{i\theta_2},\ldots,e^{i\theta_{d'}})\in\text{Aut}(\widetilde{W}).
$
Since origin is the unique point in $\widetilde{W}$ which is fixed by all $R_{\Theta}$, we have $\varphi(0)=0$. Therefore, 
 $$\Phi(0) = F_{\text{w}} \circ \varphi\circ F_{\text{v}}^{-1}(0)=F_{\text{w}}(\varphi(0))=F_{\text{w}}(0)=0.$$  Since $\Phi(0)=0$, it then follows from the description of the automorphism group of $\mathbb B_d$ (see \cite[Theorem~2.2.5]{Rudin}) that $\Phi$ is a unitary map.
     
\textbf{Case II.} Suppose that $\widetilde{V}$ is biholomorphic to $\mathbb B_{d'}$. Then, by \eqref{biholomorphism-varphi}, $\widetilde{W}$ is also biholomorphic to $\mathbb B_{d'}$. Let $\phi_{\text{v}}:\mathbb B_{d'}\to\widetilde{V}$ and $\phi_{\text{w}}:\mathbb B_{d'}\to\widetilde{W}$ be biholomorphisms such that $\phi_{\text{v}}(0)=0=\phi_{\text{w}}(0)$, and the components of both $\phi_{\text{v}}$ and $\phi_{\text{w}}$ are linear. The existence of such biholomorphisms can be ensured by a result of T. Sunada (see, \cite[Main Theorem]{Sunada}).  Set 
\[
f_{\text{v}}:=F_{\text{v}}\circ \phi_{\text{v}}:\mathbb B_{d'}\to V\subseteq\mathbb B_d\ \text { and } f_{\text{w}}:=F_{\text{w}}\circ \phi_{\text{w}}:\mathbb B_{d'}\to W\subseteq\mathbb B_d.
\]
Then $f_{\text v}$ and $f_{\text w}$ are proper polynomial maps with $f_{\text{v}}(0)=f_{\text{w}}(0)=0$. Moreover, the automorphism $\Phi$ and $\Psi$ intertwines the proper maps $f_{\text{v}}$ and $f_{\text{w}}$, where 
\[
\Psi:=\phi_{\text{w}}^{-1}\circ\varphi\circ\phi_{\text{v}}:\mathbb B_{d'}\to\mathbb B_{d'}.
\]
That is, \[
\Phi\circ f_{\text{v}}=\Phi\circ F_{\text{v}}\circ\phi_{\text{v}}= f_{\text{w}}\circ\phi_{\text{w}}^{-1}\circ F_{\text{w}}^{-1}\circ\Phi\circ F_{\text{v}}\circ\phi_{\text{v}}=f_{\text{w}}\circ \phi_{\text{w}}^{-1}\circ\varphi\circ\phi_{\text{v}}=f_{\text{w}}\circ\Psi.
\]
Then it follows from the rigidity result \cite[Theorem 9]{DAngelo} that both the automorphisms $\Phi$  and $\Psi$ are unitary map. This proves the first part of the proposition.


For the moreover part, let $U$ be the matrix representation of $\Phi$. Since $U(V)=W$ and $U(0)=0$, $U(T_0V)=T_0W$ where $T_0V$ and $T_0W$ are the tangent spaces at the origin. The proof follows once we establish that 
\[
T_0V=\text{span}\{e_1,e_2,\ldots,e_{d'}\}=T_0W.
\]
 By the definition of monomial variety, 
$$V=\mathbb B_d\cap\bigcap_{j=d'+1}^dZ(p_{_j}),$$
where $p_{_j}(\bs z)=z_j-B_j \prod_{i=1}^{d'} z_i^{\alpha_{ij}}$ for each $j = d'+1, \ldots, d$ with $B_j>0$ and  $\sum_{i=1}^{d'}\alpha_{ij} \geq 2$. Then, the tangent space $T_0V$ is the kernel of the $(d-d') \times d$ matrix given by
\[
\begin{bmatrix}
    \frac{\partial p_{_{d'+1}}}{\partial z_1}(0) &\frac{\partial p_{_{d'+1}}}{\partial z_2}(0)&\cdots&\frac{\partial p_{_{d'+1}}}{\partial z_d}(0)\\
    \frac{\partial p_{_{d'+2}}}{\partial z_1}(0) &\frac{\partial p_{_{d'+2}}}{\partial z_2}(0)&\cdots&\frac{\partial p_{_{d'+2}}}{\partial z_d}(0)\\
     \vdots & \vdots &\ddots &\vdots\\
    \frac{\partial p_{_d}}{\partial z_1}(0) &\frac{\partial p_{_d}}{\partial z_2}(0)&\cdots&\frac{\partial p_{_d}}{\partial z_d}(0)
\end{bmatrix}.
\]
Since $\sum_{i=1}^{d'}\alpha_{ij} \geq 2$ for all $j$, it follows that $\frac{\partial p_{_i}}{\partial z_j}(0)=\delta_{ij}$ for all $i,j$, and hence 
 \(
 T_0V=\text{span}\{e_1,e_2,\ldots,e_{d'}\}.
 \)
The similar argument also yields that  
 \(
  T_0W=\text{span}\{e_1,e_2,\ldots,e_{d'}\}.
 \)
 This completes the proof.
\end{proof}

We continue our study of rigidity properties of automorphisms of the unit ball associated with monomial varieties. In the preceding discussion, we considered the rigidity arising from automorphisms that maps one monomial variety to another. Here, we investigate a stronger form of rigidity under the additional assumption that the varieties are related by a unitary automorphism. We show that such a unitary equivalence necessarily induces a permutation automorphism. To establish this result, we first introduce some terminology from Lie group theory (see \cite[Chapter IV]{Br_Dieck}) and prove a lemma.

For a function $F:\mathbb C^n\to\mathbb C$, the \emph{symmetry group} of $F$ is defined by
\[
\mathcal G
:=
\{U\in \mathcal U(n): F(U(\bs z))=F(\bs z)
\text{ for all } \bs z\in\mathbb C^n\}.
\]
The elements of $\mathcal G$ are called \emph{unitary invariant transformations} of $F$. For a subgroup $G$ of $\mathcal U(n)$, we say that $F$ is \emph{$G$-invariant} if $G\subseteq\mathcal G$.
In particular, the functions that are invariant under the torus group
\[
\mathbb T^n
=
\left\{
R_{\Theta}
=
\operatorname{diag}
\left(
e^{i\theta_1},e^{i\theta_2},\dots,e^{i\theta_n}
\right)
:
\Theta=(\theta_1,\theta_2,\dots,\theta_n)\in\mathbb R^n
\right\}
\]
are called \emph{radial} functions.

Recall that, for a Lie group $G$, a subgroup $H\subseteq G$ is called a \emph{torus} if $H$ is a Lie group isomorphic to $\mathbb T^k$ for some $k$. Equivalently, a torus is a compact, connected, abelian subgroup of a Lie group. A torus $H$ in $G$ is called a \emph{maximal torus} if it is not properly contained in any other torus of $G$.
The following lemma is the key step in obtaining a permutation automorphism from a unitary automorphism.

\begin{lemma}\label{invariant by permutation}
Let $P:\mathbb C^n\to \mathbb C$ be a continuous radial function. 
If $P$ is an $U^{-1}\mathbb T^nU$-invariant function for some unitary $U\in \mathcal U(n)$ then there exists a permutation matrix $S\in \mathcal U(n)$ such that $P$ is $S^{-1}\mathbb T^nS$-invariant. Moreover, $U$ and $S$ are related by \[UT=R_{\Theta}S\]
for some $R_{\Theta}\in\mathbb T^{n}$ and for some unitary invariant transformation $T\in \mathcal U(n)$ of $P$. 
\end{lemma}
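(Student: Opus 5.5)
The plan is to work inside the symmetry group $\mathcal G$ of $P$ and use the conjugacy of maximal tori. First, continuity of $P$ makes $\mathcal G$ a closed subgroup of $\mathcal U(n)$, hence a compact Lie group. By hypothesis both $\mathbb T^n$ (since $P$ is radial) and $U^{-1}\mathbb T^nU$ are contained in $\mathcal G$. Both are connected, so both lie in the identity component $\mathcal G^\circ$, which is a compact connected Lie group.

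Next we show that each of these tori is maximal in $\mathcal G^\circ$. The torus $\mathbb T^n$ is a maximal torus of $\mathcal U(n)$, since it is its own centralizer there: a unitary commuting with all diagonal unitaries with distinct entries must be diagonal. Any torus of $\mathcal G^\circ$ containing $\mathbb T^n$ is an abelian subgroup of $\mathcal U(n)$, so it centralizes $\mathbb T^n$ and therefore equals $\mathbb T^n$. The conjugate $U^{-1}\mathbb T^nU$ is maximal in $\mathcal G^\circ$ for the same reason. By the conjugacy theorem for maximal tori in compact connected Lie groups (\cite[Chapter IV]{Br_Dieck}), there is $T_0\in\mathcal G^\circ$ with
\[
T_0^{-1}\,(U^{-1}\mathbb T^nU)\,T_0=\mathbb T^n .
\]
Setting $V:=UT_0\in\mathcal U(n)$, we get $V^{-1}\mathbb T^nV=\mathbb T^n$, so $V$ lies in the normalizer $N(\mathbb T^n)$ taken in $\mathcal U(n)$.

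The step requiring the most care is the standard identification of this normalizer as the group of monomial unitaries, namely $N(\mathbb T^n)=\{R_\Theta S: S \text{ a permutation matrix},\ R_\Theta\in\mathbb T^n\}$. Take $V\in N(\mathbb T^n)$ and choose $R\in\mathbb T^n$ with distinct diagonal entries. Then $VRV^{-1}$ is diagonal with the same eigenvalues. Hence $V$ maps each eigenline $\mathbb C e_i$ of $R$ onto an eigenline $\mathbb C e_{\pi(i)}$ of $VRV^{-1}$. So $V$ is a permutation matrix times a diagonal unitary, and we write $V=R_\Theta S$.

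It remains to assemble the conclusion. Put $T:=T_0$, which is a unitary invariant transformation of $P$; then $UT=R_\Theta S$, as required. For the invariance claim, first note that $S^{-1}\mathbb T^nS=\mathbb T^n$ because conjugating a diagonal matrix by a permutation merely permutes its entries. So $S^{-1}\mathbb T^nS$-invariance of $P$ is exactly radiality, which holds by hypothesis. One can also see this through the relation, since $S^{-1}\mathbb T^nS=T^{-1}U^{-1}\mathbb T^nUT\subseteq\mathcal G$.

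If the intended reading is instead that $P\circ S$ is related to $P\circ U$, we would record that $P\circ U=P\circ R_\Theta S T^{-1}$. Using $P\circ R_\Theta=P$ and the $\mathcal G$-invariance of $P$ under $T$, this is the relation the proof of the subsequent theorem will need.
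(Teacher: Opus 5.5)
Your proposal is correct and follows the paper's proof essentially step for step. The symmetry group of $P$ is a compact Lie group whose identity component contains both $\mathbb T^n$ and $U^{-1}\mathbb T^nU$ as maximal tori; conjugacy of maximal tori gives $T$ in that identity component with $UT$ normalizing $\mathbb T^n$, and $UT$ is then a monomial unitary $R_\Theta S$. Your observation that $S^{-1}\mathbb T^nS=\mathbb T^n$, so that the first assertion is just radiality, is a valid shortcut for the paper's explicit computation, and your closing paragraph about $P\circ U$ is unnecessary, since the subsequent theorem uses the lemma only through $UT=R_\Theta S$ and $P\circ T=P$.
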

\begin{proof}
Suppose that $P$ is $U^{-1}\mathbb T^nU$-invariant for some unitary $U$. Define the symmetry group of $P$ as 
 \[
 G:=\{M\in \mathcal U(n):P(M\bs z)=P(\bs z) ~ \text{for all $\bs z\in \mathbb C^{n}$}\}.
 \]
Then $G$ is a closed subgroup of $\mathcal U(n)$, as $P$ is continuous, and hence $G$ is a compact Lie group. The identity component $G^{\circ}$ of $G$ is therefore a connected compact Lie group. By the hypothesis,
\[
\mathbb T^n\cup (U^{-1}\mathbb T^n U
)\subseteq G.\]
Also since both $\mathbb T^n$ and $U^{-1}\mathbb T^n U$ are connected, and $I_n\in \mathbb T^n\cap (U^{-1}\mathbb T^n U)$, we have 
\[
\mathbb T^n\cup (U^{-1}\mathbb T^n U
)\subseteq G^{\circ}.
\]
It is well known that the torus group $\mathbb T^n$ is a maximal torus in $\mathcal U(n)$ (see \cite[Proposition 3.1, Chapter 4]{Br_Dieck}), and hence so is its conjugate $U^{-1}\mathbb T^n U$. Thus, both $\mathbb T^n$ and $U^{-1}\mathbb T^n U$ are maximal torus group of $G^{\circ}$. Since any two maximal torus are conjugate of each other
(\cite[Theorem 1.6, Chapter 4]{Br_Dieck}), there exists a $T\in G^\circ$ such that 
 \begin{equation}\label{invariant for w}
\mathbb T^n=T^{-1}U^{-1}\mathbb T^n U T.
 \end{equation}
 Let $M=UT$. By the above identity, for any $R_{\Theta}\in\mathbb T^n$ and $1\le j\le n$, 
 \[R_{\Theta}M(e_j)=M R_{\Theta'}(e_j)=e^{i \theta_j'} M(e_j)\] 
 for some $\Theta'\in \mathbb R^n$. Thus for each $j$, $M(e_j)$ is an eigen vector for each elements of the torus group, and therefore $M(e_j)=\lambda e_{m_j}$ for some unimodular $\lambda$ and $1\le m_j\le n$. Thus, 
 \[
 M=R_{\widetilde{\Theta}}S
 \]
 for some $R_{\widetilde{\Theta}}\in\mathbb T^n$ and a permutation matrix $S\in \mathcal U(n)$. Consequently, 
 \[
 P(S^{-1}R_{\Theta}S(\bs z))= P(M^{-1}R_{\widetilde{\Theta}}R_{\Theta} R_{\widetilde{\Theta}}^{-1}M(\bs z))= P(\bs z), \quad \bs z\in\mathbb C^n, R_{\Theta}\in\mathbb T^n,
 \]
 where for the last equality we use ~\eqref{invariant for w} and the radial property of $P$. This shows that $P$ is a $S^{-1}\mathbb T^nS$-invariant function. The moreover part follows from $UT=R_{\widetilde{\Theta}}S$, and the fact that $T\in G^{\circ}$, which completes the proof. 
\end{proof}
We are now ready to prove the main result of this section, which will be the key to determine isometric isomorphism of multiplier algebras completely. 
\begin{theorem}\label{thm:existense of permutation}
Let $V, W \subseteq \mathbb B_d$ be monomial varieties with $d'$ generating variables. 
Suppose there exists an automorphism $\Phi$ of $\mathbb B_d$ such that $\Phi(V)=W$.
Then there exists a unitary $S\in \mathcal U(d)$ such that
\[\Phi(\bs z)= S(\bs z), \quad \bs z\in\mathbb B_d.\]
Moreover, there exists a permutation matrix $U\in\mathcal U(d)$ such that $U$ is block diagonal with respect to the decomposition $\mathbb C^d=\mathbb C^{d'}\oplus \mathbb C^{d-d'}$ and  
\[U(V)=W.\] 
\end{theorem}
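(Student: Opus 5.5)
The first assertion is exactly Proposition~\ref{prop:unitary-representation}. So $\Phi = S$ is unitary and block diagonal, $S=\operatorname{diag}(A,D)$ with $A\in\mathcal U(d')$ and $D\in\mathcal U(d-d')$. The real work is the moreover part: produce a permutation matrix mapping $V$ onto $W$.

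\textbf{Step 1: transfer to the Reinhardt domains.} Since $S$ is block diagonal, the biholomorphism $\varphi=F_{\text w}^{-1}\circ S\circ F_{\text v}$ of \eqref{biholomorphism-varphi} is simply $\varphi(\bs z)=A\bs z$. Hence $A(\widetilde V)=\widetilde W$. Moreover $S(F_{\text v}(\bs z))=F_{\text w}(A\bs z)$, which gives the identity
\[
D\,f_{\text v}(\bs z)=f_{\text w}(A\bs z),\qquad \bs z\in\mathbb C^{d'},
\]
where $f_{\text v}=(B_j\bs z^{\bm\alpha_j})_j$ and $f_{\text w}=(C_j\bs z^{\bm\beta_j})_j$. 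This holds on the open set $\widetilde V$, and hence on all of $\mathbb C^{d'}$ by analyticity.

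\textbf{Step 2: apply Lemma~\ref{invariant by permutation}.} Set
\[
P(\bs z)=\|f_{\text v}(\bs z)\|^2=\sum_j B_j^2|\bs z^{\bm\alpha_j}|^2 .
\]
This $P$ is continuous and radial, and $P(\bs z)=\|f_{\text w}(A\bs z)\|^2$ because $D$ is unitary. Since $\bs w\mapsto\|f_{\text w}(\bs w)\|^2$ is radial, $P$ is $A^{-1}\mathbb T^{d'}A$-invariant. Lemma~\ref{invariant by permutation} then gives $AT=R_\Theta S'$, where $S'$ is a $d'\times d'$ permutation and $T$ is a unitary symmetry of $P$. Put $A'=R_\Theta S'=AT$. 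Because $T$ preserves $\|f_{\text v}\|^2$, and monomial maps with equal norm squares differ by a unitary (a standard D'Angelo-type fact, or direct: $\langle f_{\text v}(T\bs z),f_{\text v}(T\bs w)\rangle=\langle f_{\text v}(\bs z),f_{\text v}(\bs w)\rangle$ by polarization, and the components of $f_{\text v}$ are linearly independent since the $\bm\alpha_j$ are distinct), there is a unitary $D_T$ with $f_{\text v}\circ T=D_Tf_{\text v}$. Combining this with Step 1 yields
\[
f_{\text w}(A'\bs z)=D'' f_{\text v}(\bs z)
\]
for some unitary $D''$.

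\textbf{Step 3: conclude with monomial structure.} Since $A'=R_\Theta S'$ is a monomial matrix, each component $C_j(A'\bs z)^{\bm\beta_j}$ is a unimodular multiple of a single monomial $\bs z^{\sigma(\bm\beta_j)}$. The $\sigma(\bm\beta_j)$ are distinct, and the $\bm\alpha_j$ are distinct. Comparing monomials in the identity $f_{\text w}(A'\bs z)=D''f_{\text v}(\bs z)$, each row of $D''$ expresses a single monomial as a combination of the distinct monomials $\bs z^{\bm\alpha_k}$. This forces $D''$ to be monomial too, say $D''=R_{\Theta''}S''$, and matches the exponents:
\[
\{\sigma(\bm\beta_j)\}=\{\bm\alpha_k\}.
\]
Comparing absolute values of the coefficients then gives $C_j=B_{\tau(j)}$ for the permutation $\tau$ induced by $S''$; all coefficients are positive, so the unimodular factors drop out. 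Therefore the block permutation $U=\operatorname{diag}(S',S'')$ (with appropriate transposes/inverses) satisfies $U(V)=W$. The quickest check is to verify that $U\circ F_{\text v}=F_{\text w}\circ S'$ on $\widetilde V$ after stripping the phases, and that $S'(\widetilde V)=\widetilde W$, which follows from the matched data.

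The main obstacle is Step 2: passing from invariance of the norm $\|f_{\text v}\|^2$ under $T$ to an actual intertwining $f_{\text v}\circ T=D_Tf_{\text v}$. Polarization plus linear independence of the distinct monomials should give the required isometry on $\operatorname{span}\{f_{\text v}(\bs z)\}=\mathbb C^{d-d'}$. A second point to check is that the phases $R_\Theta$ genuinely disappear, which uses positivity of the coefficients $B_j$ and $C_j$.
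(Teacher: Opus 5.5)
Your proposal is correct and follows the paper's proof. The shared steps are: block-diagonal unitarity from Proposition~\ref{prop:unitary-representation}, the identity $Df_{\text v}=f_{\text w}\circ A$ on $\mathbb C^{d'}$, Lemma~\ref{invariant by permutation} to obtain $AT=R_\Theta S'$, and then matching monomials and positive coefficients to build the block permutation. The one difference is your polarization step lifting $P\circ T=P$ to $f_{\text v}\circ T=D_Tf_{\text v}$. It is valid but unnecessary: the paper uses the scalar identity $P(\bs z)=P(T\bs z)=Q(AT\bs z)=Q(S'\bs z)$, with $Q=\|f_{\text w}\|^2$, and compares coefficients as polynomials in $|z_1|^2,\dots,|z_{d'}|^2$.
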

\begin{proof}
Suppose that the monomial varieties are of the form: 
$$
V = \mathbb B_d \cap \bigcap_{j=d'+1}^d  Z\Big(z_j- B_j \prod\limits_{i=1}^{d'}z_i^{\alpha_{ij}} \Big),
\ \text{ and } 
W = \mathbb B_d \cap \bigcap_{j=d'+1}^d  Z\Big(z_j- C_j  \prod\limits_{i=1}^{d'}z_i^{\beta_{ij}} \Big),
$$
where $\bm\alpha_j=(\alpha_{1j}, \alpha_{2j}, \dots, \alpha_{d'j}),$ $\bm\beta_j=(\beta_{1j}, \beta_{2j}, \dots, \beta_{d'j})\mathbb Z_+^{d'}$ such that both $\bm \alpha_j$'s and $\bm\beta_j$'s are distinct. 
Let us define $f,g:\mathbb C^{d'}\to\mathbb C^{d-d'}$ by
 \[
 f(\bs z)=\big( B_{d'+1} \bm z^{\bm\alpha_{d'+1}},\ldots, 
 B_{d} \bm z^{\bm\alpha_{d}}\big), \quad \bs z\in \mathbb C^{d'}
 \]
 and 
 \[
 g(\bs z)=\big(
 C_{d'+1} \bm z^{\bm\beta_{d'+1}},\ldots, C_{d}\bm z^{\bm\beta_{d}} \big), \quad \bs z\in \mathbb C^{d'}.
 \]
In terms of these functions, the monomial varieties can be described as 
\begin{equation}\label{veriety-domain relation}
V=\{(\bs z, f(\bs z))\in\mathbb B_d: \bs z\in \mathbb B_{d'}\} \text{ and }
W=\{(\bs z, g(\bs z))\in\mathbb B_d: \bs z\in\mathbb B_{d'}\}.
\end{equation}
Now if there is an automorphism $\Phi$ of  $\mathbb B_d$ such that $\Phi(V)=W$, then by Proposition~\ref{prop:unitary-representation} there exists a unitary $M\in\mathcal U(d)$
 \[
 M = \begin{bmatrix}
     A & 0\\
     0 & D
 \end{bmatrix} \text{ such that } \Phi(\bm z)= M(\bm z),\quad \bm z\in \mathbb B_d,
 \] 
for some unitaries $A\in \mathcal U(d')$ and $D\in \mathcal U(d-d')$. 
This proves the first part of the theorem. 

For the moreover part, we continue to work on the unitary $M$. Since $M(V)=W$, by \eqref{veriety-domain relation}, for any $(\bs z, f(\bs z))\in V$ we have  
\[
 M(\bs z, f(\bs z))=(A\bs z, Df(\bs z)) \in W\implies g(A\bs z)=Df(\bs z)~ \text{for all}\ \bs z\in \mathbb B_{d'} \text{ so that  } (\bs z, f(\bs z))\in V.
\]
Since this polynomial identity holds on an open subset of $\mathbb C^{d'}$, by the Identity theorem, it holds on the entire complex plane $\mathbb C^{d'}$, and hence 
 \beq\label{iden_g_f}
 \|g(A\bs z)\|^2=\|f(\bs z)\|^2  \quad \text{for all}~  \bs z\in \mathbb C^{d'}.
 \eeq
Define
 \[
 P(\bs z):=\|f(\bs z)\|^2=\sum_{j=d'+1}^d
 B_j^2 |\bm z^{\bm\alpha_j}|^2,\quad
 \bs z\in \mathbb C^{d'}
 \]
and
\[
Q(\bs z):=\|g(\bs z)\|^2=\sum_{j=d'+1}^d C_j^2|\bm z^{\bm\beta_j}|^2, 
\quad \bs z\in \mathbb C^{d'}.
\]
Then, $P$ and $Q$ are continuous radial function. Moreover, using \eqref{iden_g_f}, for any $R_{\Theta}\in \mathbb T^n$,
 \beqn
 P(A^{-1}R_{\Theta}A(\bs z))=Q(R_{\Theta}A(\bs z))=Q(A(\bs z))=P(\bs z), \quad \bs z\in \mathbb C^{d'}.
 \eeqn
 Then by Lemma~\ref{invariant by permutation}, there exist a permutation matrix $S\in\mathcal U(d')$ and a unitary $T\in \mathcal U(d')$ such that $P$ is an $S^{-1}\mathbb T^{d'}S$-invariant function, $AT=R_{\Theta}S$ for some $R_{\Theta}\in \mathbb T^{d'}$ and $P(T(\bs z))=P(\bs z)$ for all $\bs z$. This in turn implies that, for all $\bs z\in\mathbb C^{d'}$,
 \[
P(\bs z)=P(T(\bs z))= Q(AT(\bs z))=Q(R_{\Theta}S(\bs z))= Q(S(\bs z)).
 \]
Suppose that the permutation matrix $S$ is given by 
\[
 S=\big[e_{\sigma(1)},e_{\sigma(2)},\ldots,e_{\sigma(d')}\big]
 \] for some permutation $\sigma$ of $\{1,2,\dots,d'\}$. Then it is easy to see that 
 \[S(\bm z)= S(z_1,z_2,\dots, z_{d'})=(z_{\sigma^{-1}(1)}, z_{\sigma^{-1}(2)},\dots, z_{\sigma^{-1}(d')})=\sigma^{-1}(\bm z), \quad \bm z\in\mathbb C^{d'}.\]
By replacing the defining identities $P,Q$ in the above we get, 
 \[
 \sum_{j=d'+1}^d
 B_j^2 |\bm z^{\bm\alpha_j}|^2
 = \sum_{j=d'+1}^d C_j^2 |\sigma^{-1}(\bs z)^{\bm\beta_j}|^2= \sum_{j=d'+1}^d C_j^2 |\bs z^{\sigma(\bm\beta_j)}|^2,
 \quad \bs z\in \mathbb C^{d'}.
 \]
Since $\bm\alpha_j$'s and $\bm\beta_j$'s are distinct, the number of terms on both sides of the above identity is the same. Comparing both sides of the above identity as a polynomial of $|z_i|$'s, we infer that it is possible only when 
there exists a permutation $\sigma'$ of $\{d'+1,\ldots,d\}$ such that
\[
B_{\sigma'(j)}^2|\bm z^{\bm\alpha_{\sigma'(j)}}|^2= C_j^2|\bm z^{\sigma(\bm\beta_j)}|^2,
\quad j = d'+1, \ldots, d.
\]
Thus, for each $j = d'+1, \ldots, d$, we have 
 \beq\label{similar_pattern}
 B_{\sigma'(j)}=C_j,\ \text{ and }
 \bm\alpha_{\sigma'(j)}=\sigma(\bm\beta_j).
 \eeq
 This in particular implies for all $\bm z\in\mathbb C^{d'}$
 \begin{align*}
 \sigma'(f(\bm z))& =\big( B_{\sigma'(d'+1)} \bm z^{\bm\alpha_{\sigma'(d'+1)}},\ldots, 
 B_{\sigma'(d)} \bm z^{\bm\alpha_{\sigma'(d)}}\big)\\
 &=\big(
 C_{d'+1} \bm z^{\sigma(\bm\beta_{d'+1})},\ldots, C_{d}\bm z^{\sigma(\bm\beta_{d})} \big)\\
 &= \big(
 C_{d'+1} \sigma^{-1}(\bm z)^{\bm\beta_{d'+1}},\ldots, C_{d}\sigma^{-1}(\bm z)^{\bm\beta_{d}} \big)\\
 &=g(\sigma^{-1}(\bm z)).
 \end{align*}
 Set 
 \[
U:=\big[e_{\sigma(1)},\ldots,e_{\sigma(d')},e_{\sigma'^{-1}(d'+1)},\ldots,e_{\sigma'^{-1}(d)}\big].
 \]
Then, by using the above identity, 
it follows that
 \[
 U(\bm z , f(\bm z))= (\sigma^{-1}(\bm z), \sigma'(f(\bm z)))=(\sigma^{-1}(\bm z), g(\sigma^{-1}(\bm z))), \quad \bm z\in \mathbb C^{d'}. 
 \]
 Thus $U$ is the required block diagonal permutation matrix such that $U(V)=W$. This finishes the proof.
\end{proof}
\begin{remark}

A similar rigidity result remains valid for a slightly more general class of monomial varieties. Namely, one may replace the positive constants $B_j's$ in the form of monomial varieties by arbitrary nonzero complex numbers. No substantial change is required to accommodate this class except that in this generality, one would get a unitary $U$ which is block diagonal such that one diagonal entry is a permutation matrix and the other diagonal entry would be a permutation followed by a phase unitary. In the whole proof of Theorem~\ref{thm:existense of permutation}, the first significant modification occurs in ~\eqref{similar_pattern}.
In this case, one would get 
\[e^{i\theta_j}B_{\sigma'(j)}=C_j, \quad j=d'+1,\dots, d,\]
for some \(\theta_j\in\mathbb R\).
Hence, the unitary matrix \(U\) appearing in the proof can be taken to be
 \[U=\bigl[e_{\sigma(1)},e_{\sigma(2)},\dots,e_{\sigma(d')}, e^{i\theta_{d'+1}}e_{\sigma'^{-1}(d'+1)}, e^{i\theta_{d'+2}}e_{\sigma'^{-1}(d'+2)}, \dots, e^{i\theta_d}e_{\sigma'^{-1}(d)}\bigr],\]
which satisfies \(U(V)=W\). 
\end{remark}

\section{A classification of multiplier algebras up to isometric isomorphism}

In this section, we classify, up to isometric isomorphism, the multiplier algebras associated with integrally generated CNP Dirichlet series kernels and with a class of classical CNP kernels on Reinhardt domains in $\mathbb C^n$. We first establish the classification for Dirichlet series kernels and then transfer it to the classical setting.

\subsection{On CNP Dirichlet series kernels}
A general description of multiplier varieties associated with CNP Dirichlet series kernels was obtained in \cite{ADS}. In the present setting, however, this description can be simplified considerably, yielding a more explicit form. In fact, the result below shows that the multiplier variety associated with a CNP Dirichlet series kernel is always a monomial variety. We first recall the general description obtained in \cite{ADS}.  

Given a frequency data $\mf n$,  $\mathcal J(\mf n)$ denotes the collection of all subsets $J \subseteq \{1, 2, \ldots, d\}$ such that the set $\{\ln n_j: j \in J\}$ is linearly dependent over $\mathbb Q$, but every proper subset is linearly independent over $\mathbb Q$. It was also established in ~\cite[Proposition 4.4]{ADS} that for each $J\in \mathcal J(\mf n)$, there exists a unique tuple $(\beta_j)_{j \in J} \in \mathbb N^{|J|}$ satisfying $\gcd(\beta_j)_{j \in J}=1$, and a unique partition $\{J_1, J_2\}$ of $J$ such that
 \beqn 
 		\prod_{i \in J_1}	n_i^{\beta_i} = \prod_{i \in J_2} n_i^{\beta_i}.
 \eeqn

For each $J\in \mathcal J(\mf n)$, we use these data to define the polynomial   
\beq\label{associated polynomial}
q_{_J}(\bs z) = \prod\limits_{i \in J_1}
 		b_{i}^{\frac{\beta_i}{2}}\prod\limits_{i \in J_2}z_i^{\beta_i}-\prod\limits_{i \in J_2}b_{i}^{\frac{\beta_i}{2}}\prod\limits_{i \in J_1}z_i^{\beta_i},\quad \bs z \in \mathbb C^d.
\eeq
We refer to $q_{_J}$ as {\it the polynomial associated with $J$}. The multiplier varieties can then be described as follows. 
\begin{theorem}{\cite[Theorem 4.6]{ADS}}\label{variety-gen}
Let $(\mf b, \mf n= (n_j)_{j=1}^d)$ be a pair of weight and frequency data of length $d\ge 2$ 
such that $\{\ln n_j: j=1,\dots,d\}$ is linearly dependent over $\mathbb Q$. 
Then, the multiplier variety $V$ associated with $K_{\mf b, \mf n}$ is given by
\beqn
V = \mathbb B_d \cap \bigcap_{J \in \mathcal J(\mf n)} Z(q_{_J}),
 \eeqn
where for each $J\in \mathcal J(\mf n)$, $q_{_J}$ is the polynomial associated with $J$, as in ~\eqref{associated polynomial}. 
 \end{theorem}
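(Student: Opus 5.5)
The plan is to prove the two inclusions separately. Write $V_0:=\mathbb B_d\cap\bigcap_{J\in\mathcal J(\mf n)}Z(q_{_J})$; the multiplier variety $V$ is the smallest multiplier variety containing $f_{\mf b,\mf n}(\mathbb H_\rho)$.

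\emph{Step 1: $V\subseteq V_0$.} Polynomials lie in $\mathcal M_d$, so $V_0$ is a multiplier variety, and it suffices to check that each $q_{_J}$ vanishes on $f_{\mf b,\mf n}(\mathbb H_\rho)$. Substituting $z_i=\sqrt{b_i}\,n_i^{-s}$ into \eqref{associated polynomial}, both monomials become
\[
\prod_{i\in J}b_i^{\beta_i/2}\times\Bigl(\prod_{i\in J_2}n_i^{\beta_i}\Bigr)^{-s}
\quad\text{and}\quad
\prod_{i\in J}b_i^{\beta_i/2}\times\Bigl(\prod_{i\in J_1}n_i^{\beta_i}\Bigr)^{-s},
\]
respectively. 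These agree by the defining relation $\prod_{J_1}n_i^{\beta_i}=\prod_{J_2}n_i^{\beta_i}$.

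\emph{Step 2: $V_0\subseteq V$.} Fix $g\in\mathcal M_d$ vanishing on $f_{\mf b,\mf n}(\mathbb H_\rho)$; we must show $g$ vanishes on $V_0$.
\begin{enumerate}
\item[(a)] Let $L=\{\bm k\in\mathbb Z^d:\prod_j n_j^{k_j}=1\}$ be the relation lattice, and choose indices so that $\{\ln n_i\}_{i\in I}$ is a $\mathbb Q$-basis of the span of the logarithms. Consider the holomorphic family
\[
\Lambda(\bm\lambda)=\bigl(\sqrt{b_j}\,e^{-\langle\bm\lambda,\bm\ell_j\rangle}\bigr)_{j=1}^d,
\qquad \bm\lambda\in\mathbb C^{|I|},
\]
where $\ln n_j=\langle\bm r,\bm\ell_j\rangle$ for $\bm\ell_j\in\mathbb Q^{|I|}$ and a basis vector $\bm r$ of logarithms. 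After clearing denominators, the points of $V_0\cap(\mathbb C^*)^d$ should be exactly the points $\Lambda(\bm\lambda)$ lying in $\mathbb B_d$.
\item[(b)] The curve $s\mapsto f_{\mf b,\mf n}(s)$ is $\Lambda$ restricted to the complex line $\bm\lambda=s\bm r$. By Kronecker's theorem, the points $(n_j^{-it})_j$, $t\in\mathbb R$, are dense in the annihilator torus of $L$. Hence, by continuity of $g$ on $\mathbb B_d$, $g\circ\Lambda$ vanishes on $\{\sigma\bm r+i\bm\theta\}$ for all $\sigma>\rho$ and all $\bm\theta$ in a real $|I|$-dimensional set.
\item[(c)] A holomorphic function on a connected domain of $\mathbb C^{|I|}$ that vanishes on a totally real piece of dimension $|I|$ vanishes identically. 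Therefore $g\circ\Lambda\equiv0$ on the connected component of $\Lambda^{-1}(\mathbb B_d)$ containing these points. One must check that this component is all of $\Lambda^{-1}(\mathbb B_d)$; I expect this to follow because $\Lambda^{-1}(\mathbb B_d)$ is a tube domain over a convex base, by log-convexity of the defining sum of exponentials.
\item[(d)] Points of $V_0$ with some vanishing coordinates are limits of points of $V_0\cap(\mathbb C^*)^d$, obtained by scaling toward the boundary strata along the real directions of $\bm\lambda$. So $g$ vanishes on them by continuity.
\end{enumerate}

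\emph{Main obstacle.} The hard step is (a): showing that the circuit binomials $q_{_J}$, $J\in\mathcal J(\mf n)$, cut out on the torus exactly the set defined by all relations in $L$, rather than a union with extra torsion translates. The circuits only span $L\otimes\mathbb Q$ a priori. I would first try to exploit that $L$ is saturated (if $m\bm k\in L$ then $\bm k\in L$, since the $n_j$ are positive) and that every element of a saturated lattice is a $\mathbb Q_{\ge0}$-combination of conformal circuits. Combining this with the gcd-normalization in \cite[Proposition~4.4]{ADS}, the aim is to show that $w=z/\sqrt{\mf b}$ satisfying $w^{\bm c}=1$ for every circuit $\bm c$ forces $w^{\bm k}=1$ for every $\bm k\in L$. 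Step (d) is the secondary difficulty: one must approximate boundary-stratum points of $V_0$ from inside the torus part, which again relies on the same circuit description.
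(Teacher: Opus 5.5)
The paper does not prove this statement; it is quoted from \cite[Theorem~4.6]{ADS}, so your outline has to be judged on its own. Step~1 is correct, and so are (b) and (c) of Step~2:
\begin{itemize}
\item Kronecker's theorem gives density in the compact part of $T_L=\{w\in(\mathbb C^*)^d: w^{\bm k}=1 \text{ for all } \bm k\in L\}$.
\item Vanishing on the maximal totally real slice $\sigma\bm r+i\mathbb R^{|I|}$ propagates by the identity theorem.
\item $\Lambda^{-1}(\mathbb B_d)$ is a tube over the convex set $\{x:\sum_j b_je^{-2\langle x,\bm\ell_j\rangle}<1\}$, hence connected.
\end{itemize}
The genuine gap is step (a). You only assert it, and the whole theorem rests on it. Put $w_j=z_j/\sqrt{b_j}$. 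On the torus, $q_{_J}=0$ is equivalent to $w^{\bm c_J}=1$, where $\bm c_J$ is the primitive circuit vector of $J$. Hence $V_0\cap(\mathbb C^*)^d=\sqrt{\mf b}\,T_{L'}\cap\mathbb B_d$, where $L'\subseteq L$ is the sublattice generated by the circuit vectors.

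If $L'\neq L$, then $T_{L'}$ is a finite union of torsion translates $\zeta T_L$. Your Kronecker/identity-theorem argument reaches only $\zeta=1$, and the statement would in fact be false. Indeed, for $\bm k=\bm k^+-\bm k^-\in L\setminus L'$, the rescaled binomial $w^{\bm k^+}-w^{\bm k^-}$ vanishes on $f_{\mf b,\mf n}(\mathbb H_\rho)$. It does not vanish at the coordinatewise product $\zeta\cdot f_{\mf b,\mf n}(\sigma)\in V_0$ for a suitable torsion point $\zeta$ annihilating $L'$ but not $\bm k$. The tools you name do not close this gap. A conformal decomposition only yields $N\bm k=\sum_c m_c\bm c$ with $m_c\in\mathbb Z_+$, hence $(w^{\bm k})^N=1$. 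Neither saturation of $L$ nor primitivity of each $\bm c_J$ removes this root-of-unity ambiguity by itself.

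The missing ingredient is the lattice fact that the circuit vectors of a saturated lattice generate it over $\mathbb Z$. It can be proved one prime at a time.
\begin{enumerate}
\item Fix a prime $p$ and let $m=\operatorname{rank}L$. By saturation, the $m\times m$ minors of a basis matrix of $L$ have gcd $1$. So there is a set $K$ of $m$ coordinates such that the projection $\pi_K$ is injective on $L$ and $[\mathbb Z^K:\pi_K(L)]$ is prime to $p$.
\item Then $\{\ln n_i:i\notin K\}$ is a $\mathbb Q$-basis of the span. For $k\in K$, the unique circuit inside $K^c\cup\{k\}$ satisfies $\pi_K(\bm c^{(k)})=\pm\beta^{(k)}e_k$.
\item By primitivity, $\beta^{(k)}$ is the order of $e_k$ in $\mathbb Z^K/\pi_K(L)$, so it is prime to $p$.
\item These $m$ circuits span a sublattice of $L$ whose index divides $\prod_k\beta^{(k)}$. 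Hence $[L:L']$ is prime to $p$, and since $p$ is arbitrary, $L'=L$.
\end{enumerate}

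Step (d) has the same status and needs two further points you do not supply.
\begin{itemize}
\item The support $S$ of any $z\in V_0$ must be a face: some $u\in\mathbb R^{|I|}$ has $\langle u,\bm\ell_j\rangle=0$ for $j\in S$ and $\langle u,\bm\ell_j\rangle>0$ for $j\notin S$. This follows from four facts: $q_{_J}(z)=0$ forces $J_1\subseteq S\iff J_2\subseteq S$; a conformal circuit decomposition; Gordan's alternative; and $L$ contains no nonzero $\bm k\geq 0$, because every $\ln n_j>0$.
\item The restriction $z|_S$ must lift to a point $\Lambda(\bm\lambda_0)$. This uses the lattice fact above for $L\cap\mathbb Z^S$.
\end{itemize}
With both in hand, $\Lambda(\bm\lambda_0+tu)\to z$ as $t\to+\infty$, which completes (d).
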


We now specialize the preceding description of multiplier varieties to
integrally generated CNP Dirichlet series kernels. In this situation the defining variety
admits a particularly simple description in terms of the generators.
Let
\[
\mf b=(b_j)_{j=1}^d,
\qquad
\mf n=(n_j)_{j=1}^d,
\qquad d\geq 2,
\]
be weight and frequency data. Suppose that, for some \(1\leq d'<d\),
the subtuple $\mf n'=(n_1,\ldots,n_{d'})$ is a generating subtuple of \(\mf n\). That is, \(\{\ln n_1,\ldots,\ln n_{d'}\}\) is linearly independent over $\mathbb Q$, and there exists  $\bm\alpha_j=(\alpha_{1j},\alpha_{2j},\dots,\alpha_{d'j})\in\mathbb Z_+^{d'}$ such that
\begin{equation}\label{representation}
n_j=(\mf n')^{\bm\alpha_j}, \quad j=d'+1,\ldots,d.
\end{equation}
For each \(j=d'+1,\ldots,d\), set
\[
I_j=\{i: 1\le i\le d', \alpha_{ij}\neq 0\}, \text{ and } \widetilde I_j= I_j\cup \{j\}.
\]
To generate the frequency $n_j$ $(d'+1\le j\le d)$, one only needs $\{n_i: i\in I_j\}$. Since the generators \(\ln n_1,\ldots,\ln n_{d'}\) are
independent over $\mathbb Q$, the identity \eqref{representation} implies that
\(
\widetilde I_j\in\mathcal J(\mf n),
\) $\{ I_{j}, \{j\}\}$ is the unique partition, and with $\alpha_{jj}:=1$, $(\alpha_{ij})_{i\in \widetilde{I}_j}$ is the unique tuple of natural numbers associated with $\widetilde{I}_j$.
Then the polynomial corresponding to \(\widetilde I_j\) is
\begin{equation}\label{gen-polynomial}
q_{\widetilde I_j}(\bs z)
=
z_j\prod_{i\in I_j}b_i^{\frac{\alpha_{ij}}{2}}
-
\sqrt{b_j}\prod_{i\in I_j}z_i^{\alpha_{ij}}= z_j(\mf b')^{\frac{1}{2}\bm\alpha_j}-\sqrt{b_j}\prod_{i= 1}^{d'}z_i^{\alpha_{ij}},
\end{equation}
where $\mf b'=(b_1,b_2,\dots,b_{d'})$.
Consequently, Theorem~\ref{variety-gen} gives
\[
V
\subseteq
\mathbb B_d\cap
\bigcap_{j=d'+1}^d Z(q_{\widetilde I_j}).
\]
We next show that this inclusion is an equality. Let
\[
\bs w=(w_1,\ldots,w_d)
\in
\mathbb B_d\cap
\bigcap_{j=d'+1}^d Z(q_{\widetilde I_j}).
\]
By Theorem~\ref{variety-gen}, it suffices to prove that
$q_{_J}(\bs w)=0$ for every $J\in\mathcal J(\mf n)$.
Fix \(J\in\mathcal J(\mf n)\). Let
\(
J=J_1\dot\cup J_2
\)
be the unique partition associated with \(J\), and let
\((\beta_j)_{j\in J}\) be the unique relatively prime positive integers
appearing in the corresponding multiplicative relation
\begin{equation}\label{uniq-rep}
\prod_{j\in J_1}n_j^{\beta_j}
=
\prod_{j\in J_2}n_j^{\beta_j}.
\end{equation}
Define $\bm\beta_1=(\beta_{i1})_{i=1}^d$ and $\bm\beta_2=(\beta_{i2})_{i=1}^d\in \mathbb Z_+^d$ by
\[
\beta_{i1}
=
\begin{cases}
\beta_i,& i\in J_1,\\
0,& i\notin J_1,
\end{cases}
\qquad
\beta_{i2}
=
\begin{cases}
\beta_i,& i\in J_2,\\
0,& i\notin J_2.
\end{cases}
\]
Then \eqref{uniq-rep} is equivalent to
\begin{equation}\label{update_n_i_iden}
\prod_{i=1}^d n_i^{\beta_{i1}}
=
\prod_{i=1}^d n_i^{\beta_{i2}}.
\end{equation}
For \(k=1,2\), using \eqref{representation} we get
\begin{align*}
\prod_{i=1}^d n_i^{\beta_{ik}}
=
\prod_{i=1}^{d'}n_i^{\beta_{ik}}
\prod_{j=d'+1}^d
\left(
\prod_{i=1}^{d'}
n_i^{\alpha_{ij}}
\right)^{\beta_{jk}}
=
\prod_{i=1}^{d'}
n_i^{
\beta_{ik}
+
\sum_{j=d'+1}^d
\alpha_{ij}\beta_{jk}
}.
\end{align*}
Consequently, substituting the above into the both sides of \eqref{update_n_i_iden} yields
\[
\prod_{i=1}^{d'}
n_i^{
\beta_{i1}
+\sum_{j=d'+1}^d
\alpha_{ij}\beta_{j1}
}
=
\prod_{i=1}^{d'}
n_i^{
\beta_{i2}
+\sum_{j=d'+1}^d
\alpha_{ij}\beta_{j2}
}.
\]
Since \(\ln n_1,\ldots,\ln n_{d'}\) are linearly independent over
\(\mathbb Q\), the corresponding exponents must agree. Hence
\begin{equation}\label{iden_of_expon}
\beta_{i1}
+
\sum_{j=d'+1}^d
\alpha_{ij}\beta_{j1}
=
\beta_{i2}
+
\sum_{j=d'+1}^d
\alpha_{ij}\beta_{j2},
\qquad
i=1,\ldots,d'.
\end{equation}
On the other hand, since
\(q_{\widetilde I_j}(\bs w)=0\) for
\(j=d'+1,\ldots,d\), equation \eqref{gen-polynomial} implies
\begin{equation}\label{w_j_rep}
w_j
=
\sqrt{b_j}(\bm b')^{-\frac{1}{2}\bm\alpha_j}
\prod_{i=1}^{d'}w_i^{\alpha_{ij}}=
\sqrt{b_j}
\prod_{i=1}^{d'}
b_i^{-\frac{1}{2}\alpha_{ij}}
w_i^{\alpha_{ij}},
\qquad
j=d'+1,\ldots,d.
\end{equation}
Using \eqref{w_j_rep} and the definition of \(q_{_J}\), as in \eqref{associated polynomial},
 we obtain
\begin{align*}
q_{_J}(\bs w)
&=
\prod_{i=1}^d b_i^{\frac{\beta_{i1}}{2}}
\prod_{i=1}^{d'}w_i^{\beta_{i2}}
\prod_{j=d'+1}^d
\left(
b_j^{\frac{\beta_{j2}}{2}}
\prod_{i=1}^{d'}
b_i^{-\frac{1}{2}\beta_{j2}\alpha_{ij}}
w_i^{\beta_{j2}\alpha_{ij}}
\right)
\\
&\quad \qquad \qquad\qquad-
\prod_{i=1}^d b_i^{\frac{\beta_{i2}}{2}}
\prod_{i=1}^{d'}w_i^{\beta_{i1}}
\prod_{j=d'+1}^d
\left(
b_j^{\frac{\beta_{j1}}{2}}
\prod_{i=1}^{d'}
b_i^{-\frac{1}{2}\beta_{j1}\alpha_{ij}}
w_i^{\beta_{j1}\alpha_{ij}}
\right)
\\
&=
\left(
\prod_{j=d'+1}^d
b_j^{\frac{1}{2}(\beta_{j1}+\beta_{j2})}
\right)
\Bigg[
\prod_{i=1}^{d'}
b_i^{
\frac12
\left(
\beta_{i1}
-
\sum_{j=d'+1}^d
\alpha_{ij}\beta_{j2}
\right)
}
w_i^{
\beta_{i2}
+
\sum_{j=d'+1}^d
\alpha_{ij}\beta_{j2}
}
\\
&\hspace{5cm}
-
\prod_{i=1}^{d'}
b_i^{
\frac12
\left(
\beta_{i2}
-
\sum_{j=d'+1}^d
\alpha_{ij}\beta_{j1}
\right)
}
w_i^{
\beta_{i1}
+
\sum_{j=d'+1}^d
\alpha_{ij}\beta_{j1}
}
\Bigg]\\
&=0 \quad (\text{by } \eqref{iden_of_expon}).
\end{align*}
Since \(J\in\mathcal J(\mf n)\) was arbitrary, Theorem~\ref{variety-gen}
now gives the reverse inclusion. We have therefore proved the following.

\begin{proposition}\label{variety-form}
Let $K_{\mf b, \mf n}$ be an integrally generated CNP Dirichlet series kernel associated with frequency and weight data $(\mf b, \mf n)$ of length $d$, where $\mf n'=(n_1,n_2,\dots, n_{d'})$ is the generating subtuple of $\mf n$ for $1\le d'<d$, and 
\[
n_j= (\mf n')^{\bm \alpha_j},\quad
\bm\alpha_j\in\mathbb Z_+^{d'},
\quad
j=d'+1,\ldots,d.
\]
Then the multiplier variety $V$ associated with \(K_{\mf b,\mf n}\) is a monomial variety with $d'$ generating variables, given by
\beqn
V
=
\mathbb B_d
\cap
\bigcap_{j=d'+1}^d
Z\Big(z_j-\sqrt{b_j} (\bm b')^{-\frac{1}{2}\bm\alpha_j}  \prod\limits_{i=1}^{d'}z_i^{\alpha_{ij}}\Big),
\eeqn
where $\mf b'=(b_1,b_2,\dots, b_{d'})$ and $\bm\alpha_j=(\alpha_{1j}, \alpha_{2j},\dots, \alpha_{d'j})$.

\end{proposition}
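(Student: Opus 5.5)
The statement is essentially what the preceding computation has established, so the plan is to organize that argument into two inclusions and then check the definition of a monomial variety.

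First, the forward inclusion. For each $j=d'+1,\ldots,d$, I will verify that $\widetilde I_j=I_j\cup\{j\}$ belongs to $\mathcal J(\mf n)$. Linear dependence of $\{\ln n_i:i\in\widetilde I_j\}$ follows from \eqref{representation}. Minimality holds because any proper subset either omits $j$, and so consists of independent generators, or omits some $i\in I_j$; in the latter case a rational relation would force a relation among the generators in which $\ln n_i$ appears with the nonzero coefficient $\alpha_{ij}$, contradicting their independence. By the uniqueness in \cite[Proposition~4.4]{ADS}, the associated partition is $\{I_j,\{j\}\}$ and the exponent tuple is $(\alpha_{ij})_{i\in I_j}$ together with $1$; the gcd condition holds automatically because $j$ carries exponent $1$. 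Theorem~\ref{variety-gen} then gives $V\subseteq \mathbb B_d\cap\bigcap_j Z(q_{\widetilde I_j})$. Dividing \eqref{gen-polynomial} by the positive constant $(\mf b')^{\frac12\bm\alpha_j}$ shows that $Z(q_{\widetilde I_j})=Z\bigl(z_j-\sqrt{b_j}(\mf b')^{-\frac12\bm\alpha_j}\prod_i z_i^{\alpha_{ij}}\bigr)$.

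Second, the reverse inclusion. Take $\bs w$ in the right-hand side and an arbitrary $J\in\mathcal J(\mf n)$. Substitute \eqref{w_j_rep} into $q_{_J}$ and use the exponent identity \eqref{iden_of_expon}, which comes from the rational independence of the generators, to see that the two monomials in $q_{_J}(\bs w)$ have identical $w$-exponents. The main obstacle is that the $b$-coefficients must also agree. After factoring out $\prod_j b_j^{\frac12(\beta_{j1}+\beta_{j2})}$, the $b_i$-exponents are $\frac12(\beta_{i1}-\sum_j\alpha_{ij}\beta_{j2})$ and $\frac12(\beta_{i2}-\sum_j\alpha_{ij}\beta_{j1})$. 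Their difference equals $\frac12$ times the difference of the two sides of \eqref{iden_of_expon}, which is zero, so $q_{_J}(\bs w)=0$. This is the step requiring care, and I would recheck the bookkeeping there. Theorem~\ref{variety-gen} then yields $\bs w\in V$.

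Finally, I will confirm that $V$ meets the definition of a monomial variety. The coefficients $a_j=\sqrt{b_j}(\mf b')^{-\frac12\bm\alpha_j}$ are positive. The $\bm\alpha_j$ are distinct because the $n_j$ are distinct. Each satisfies $|\bm\alpha_j|\ge2$: if $|\bm\alpha_j|=0$, then $n_j=1$, and if $|\bm\alpha_j|=1$, then $n_j$ coincides with a generator; both cases are excluded. Hence $V$ is a monomial variety with $d'$ generating variables.
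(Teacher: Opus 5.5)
Your proposal is correct and follows the paper's own argument. The forward inclusion comes from $\widetilde I_j\in\mathcal J(\mf n)$ together with Theorem~\ref{variety-gen}, and the reverse inclusion from substituting \eqref{w_j_rep} into $q_{_J}$ and invoking \eqref{iden_of_expon}. You also make explicit several steps the paper leaves implicit: the minimality of $\widetilde I_j$, the cancellation of the $b_i$-exponents, and the checks that $|\bm\alpha_j|\ge 2$ and that the $\bm\alpha_j$ are distinct. All of these are right. In the minimality step, tighten the wording slightly: the coefficient of $\ln n_i$ after substitution is $c_j\alpha_{ij}$, not $\alpha_{ij}$. Either normalize the coefficient $c_j$ of $\ln n_j$ to $1$, or note that if $c_j=0$ you are already in the first case.
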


The notion of a similar pattern also takes a particularly simple form when the frequency data admits a generating subtuple. This is relevant since integrally generated CNP Dirichlet series kernels have such generating subtuples. We record this observation in the following lemma. It follows directly from the definition that if $(\mf b,\mf n)$ and $(\mf c,\mf m)$ are pairs of weight and frequency data of length $d$, and if $\mf n$ and $\mf m$ admit generating subtuples of different lengths, then $(\mf b,\mf n)$ and $(\mf c,\mf m)$ cannot have a similar pattern. Thus, in studying similar patterns, it is enough to restrict attention to pairs whose frequency data admit generating subtuples of the same length.  

\begin{lemma}\label{lem:similar-pattern}
Let $(\mf b, \mf n)$ and $(\mf c, \mf m)$ be pairs of weight and frequency data of length $d$. Suppose that for $1\le d' < d$, $\mf n'=(n_1, n_2,\dots, n_{d'})$ and $\mf m'=(m_1,m_2,\dots, m_{d'})$ are generating subtuple of $\mf n$ and $\mf m$, respectively, with
    \[
    n_j=  (\mf n')^{\bm\alpha_j}\quad \text{and}\quad m_j=(\mf m')^{\bm\beta_j},\quad \bm \alpha_j, \bm \beta_j\in \mathbb Z_{+}^{d'}, \quad j=d'+1,\dots, d.
    \] 
Then, $(\mf b,\mf n)$ and $(\mf c,\mf m)$ admit a similar pattern if and only if 
\beq\label{eq:less-identities-similar-pattern}
\bm\alpha_j=\bm\beta_j \quad \text{and}\quad b_j (\bm c')^{\bm \beta_j}= c_j(\bm b')^{\bm\alpha_j}, \quad j=d'+1,\dots,d,
\eeq
where $\bm b'=(b_1,b_2,\dots, b_{d'})$ and $\bm c'=(c_1,c_2,\dots, c_{d'})$.
 \end{lemma}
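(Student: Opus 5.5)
The plan is to compute $\mathcal J(\mf n)$ and its associated data explicitly in terms of the exponent vectors $\bm\alpha_j$, and then to compare the resulting identities for $\mf n$ and $\mf m$.

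For the direction ($\Rightarrow$), suppose $(\mf b,\mf n)$ and $(\mf c,\mf m)$ admit a similar pattern. The discussion preceding Proposition~\ref{variety-form} shows that for each $j=d'+1,\dots,d$ the set $\widetilde I_j=I_j\cup\{j\}$ belongs to $\mathcal J(\mf n)$. Its unique partition is $\{I_j,\{j\}\}$, and its unique tuple is $(\alpha_{ij})_{i\in I_j}$ together with $\alpha_{jj}=1$. Note that the gcd is $1$ because of the entry $1$. The same holds for $\mf m$, with $I'_j=\{i\le d':\beta_{ij}\ne0\}$ and the tuple $(\beta_{ij})_{i\in I'_j}$, $\beta_{jj}=1$.

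Next I need to identify $\widetilde I_j$ inside $\mathcal J(\mf m)=\mathcal J(\mf n)$. Since $\widetilde I_j\in\mathcal J(\mf m)$, the set $\{\ln m_i:i\in\widetilde I_j\}$ is dependent. The indices $i\le d'$ are independent, so the set $\widetilde I_j\cap\{1,\dots,d'\}\cup\{j\}$ is dependent. By minimality, this set contains a member of $\mathcal J(\mf m)$ containing $j$. The point is that any minimal dependent set containing exactly one index $j>d'$ must be $I'_j\cup\{j\}$. This follows because $\ln m_j$ has a unique expansion in the independent $\ln m_i$, $i\le d'$, so the dependency forces all $i\in I'_j$ to appear. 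Applying this with $\mf n$ and $\mf m$ swapped, I get $I_j=I'_j$. Condition (ii) then says the tuples agree, so $\alpha_{ij}=\beta_{ij}$ for all $i$, that is, $\bm\alpha_j=\bm\beta_j$. The weight identity in (ii), applied with $J_1=I_j$, $J_2=\{j\}$ and $\beta_i=\alpha_{ij}$, reads
\[
(\mf c')^{\bm\alpha_j}b_j=c_j(\mf b')^{\bm\alpha_j},
\]
or the same identity with the roles of $J_1,J_2$ swapped, which is equivalent. This gives \eqref{eq:less-identities-similar-pattern}.

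For the direction ($\Leftarrow$), assume \eqref{eq:less-identities-similar-pattern}. Writing each $\ln n_i$ for $i>d'$ as $\sum_k\alpha_{ki}\ln n_k$, a set $J$ is $\mathbb Q$-dependent for $\mf n$ if and only if the vectors $\{e_i\}_{i\in J,i\le d'}\cup\{\bm\alpha_i\}_{i\in J,i>d'}$ in $\mathbb Q^{d'}$ are dependent. The same criterion holds for $\mf m$ with $\bm\beta_i=\bm\alpha_i$. Hence $\mathcal J(\mf n)=\mathcal J(\mf m)$. The relation $\prod_{J_1}n_i^{\beta_i}=\prod_{J_2}n_i^{\beta_i}$ is likewise equivalent to a linear identity among these same vectors, so the partitions and tuples coincide.

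The main obstacle is the weight identity for a general $J$. I would handle it by multiplicativity. Set $\lambda_i=\ln b_i-\ln c_i$, and for $i>d'$ write $\lambda_i=\sum_k\alpha_{ki}\lambda_k$, which is exactly the hypothesis. The required identity for $J$ is
\[
\sum_{J_1}\beta_i\lambda_i=\sum_{J_2}\beta_i\lambda_i .
\]
Substituting the expansions, both sides become $\sum_{k\le d'}(\cdot)_k\lambda_k$ with coefficient vectors equal to the two sides of \eqref{iden_of_expon}. These coincide, so the identity holds.
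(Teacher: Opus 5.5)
Your proof is correct, but it takes a genuinely different route from the paper.

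The paper never works with $\mathcal J(\mf n)$ directly. It cites \cite[Proposition~5.5]{ADS}, which says a similar pattern holds if and only if $V=W$. It then uses Proposition~\ref{variety-form} to write $V$ and $W$ as graphs of monomial maps over the same generating variables, with coefficients $\sqrt{b_j}(\mf b')^{-\frac12\bm\alpha_j}$ and $\sqrt{c_j}(\mf c')^{-\frac12\bm\beta_j}$. Finally it reads off $\bm\alpha_j=\bm\beta_j$ and the weight identity by the identity theorem.

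Your argument stays purely arithmetic:
\begin{itemize}
\item For ($\Rightarrow$), you only test the similar-pattern conditions on the circuits $\widetilde I_j$. The key fact is that a minimal dependent set meeting $\{d'+1,\dots,d\}$ only in $j$ must equal $I'_j\cup\{j\}$.
\item For ($\Leftarrow$), you transport dependence and the multiplicative relations to the vectors $e_i$ and $\bm\alpha_i$ in $\mathbb Q^{d'}$, using the injectivity of $v\mapsto\sum_k v_k\ln n_k$. You handle the weights through the additive functional $v\mapsto\sum_k v_k\lambda_k$ with $\lambda_i=\ln b_i-\ln c_i$. This is exactly the logarithmic form of the exponent bookkeeping behind \eqref{iden_of_expon}.
\end{itemize}

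The paper's route is shorter and matches the geometric viewpoint used in Theorem~\ref{rigidity-Isometric}. However, it depends on \cite[Proposition~5.5]{ADS} and, through Proposition~\ref{variety-form}, on \cite[Theorem~4.6]{ADS}. Your route needs only the uniqueness statement \cite[Proposition~4.4]{ADS}, which is already built into the definition of a similar pattern. It is independent of multiplier-variety theory. It also makes explicit that the entire pattern data is fixed by the exponent vectors plus one linear functional on the log-weights.

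One minor point: in ($\Rightarrow$), a single application of your circuit observation to $\widetilde I_j\in\mathcal J(\mf m)$ already gives $I_j=I'_j$, so the swap of $\mf n$ and $\mf m$ is unnecessary.
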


\begin{proof}
Let $V$ and $W$ be the multiplier varieties associated with the integrally generated CNP Dirichlet series kernels $K_{\mf b,\mf n}$ and $K_{\mf c,\mf m}$, respectively. Then, by Proposition~\ref{variety-form}, $V$ and $W$ are monomial varieties and have the form
    \[
    V=\Big\{\bs z\in\mathbb B_d:z_j=\sqrt{b_j} (\bm b')^{-\frac{1}{2}\bm\alpha_j}  \prod\limits_{i=1}^{d'}z_i^{\alpha_{ij}},\ j=d'+1,d'+2,\dots,d\Big\}
    \]
    and 
    \[
    W=\Big\{\bs z\in\mathbb B_d:z_j=\sqrt{c_j} (\bm c')^{-\frac{1}{2}\bm\beta_j}  \prod\limits_{i=1}^{d'}z_i^{\beta_{ij}}\ j=d'+1,d'+2,\dots,d\Big\}.
    \]
By \cite[Proposition~5.5]{ADS}, $(\mf b,\mf n)$ and $(\mf c,\mf m)$ admit a similar pattern if and only if $V=W$. Thus, it is enough to show that $V=W$ if and only if ~\eqref{eq:less-identities-similar-pattern} holds. 
It is easy to see that if the identities in ~\eqref{eq:less-identities-similar-pattern} hold then the defining polynomials of $V$ and $W$ become identical and therefore, $V=W$. Now assume that $V=W$. Recall that the monomial variety $V$ is a graph over the Reinhardt domain 
\[
\widetilde{V}:=\big\{\bs z\in \mathbb C^{d'}:\sum_{j=1}^{d'}|z_j|^2+\sum_{j=d'+1}^db_{j}(\bm b')^{-\bm\alpha_{j}}|\bs z^{2\bm \alpha_j}|<1\big\}.
\]
Then for any $(z_1,z_2,\dots, z_{d'})\in \widetilde{V}$,
    \[
    \big(z_1,\dots,z_{d'},\sqrt{b_j} (\bm b')^{-\frac{1}{2}\bm\alpha_{d'+1}}  \prod\limits_{i=1}^{d'}z_i^{\alpha_{i(d'+1)}},\dots,\sqrt{b_j} (\bm b')^{-\frac{1}{2}\bm\alpha_d}  \prod\limits_{i=1}^{d'}z_i^{\alpha_{id}}\big)\in V=W,
    \]
and therefore by the defining property of $W$, 
\[
\sqrt{b_j} (\bm b')^{-\frac{1}{2}\bm\alpha_j}  \prod\limits_{i=1}^{d'}z_i^{\alpha_{ij}}=\sqrt{c_j} (\bm c')^{-\frac{1}{2}\bm\beta_j}  \prod\limits_{i=1}^{d'}z_i^{\beta_{ij}},\quad j=d'+1,\dots,d.
\]
This shows that all the above polynomial identities hold on an open set $\widetilde{V}$ of $\mathbb C^{d'}$. Hence, by the identity theorem, they hold identically on $\mathbb C^{d'}$, and therefore  
\[
\bm\alpha_j=\bm \beta_j,\ \text{ and } \sqrt{b_j} (\bm b')^{-\frac{1}{2}\bm\alpha_j}=\sqrt{c_j} (\bm c')^{-\frac{1}{2}\bm\beta_j},\quad j=d'+1,\dots,d.
\] 
 This completes the proof. 
\end{proof}

We are now ready for the complete solution to the isometric isomorphism problem of multiplier algebras for integrally generated CNP Dirichlet series kernels. For a permutation $\sigma$ of $\{1,2,\dots,d\}$ and 
$\mf n=(n_1,\dots, n_d)$, recall that we use $\sigma(\mf n)$ to denote $(n_{\sigma(1)}, n_{\sigma(2)},\dots, n_{\sigma(d)})$.
\begin{theorem}\label{rigidity-Isometric}
    Let $K_{\mf b, \mf n}$ and $K_{\mf c, \mf m}$ be integrally generated CNP Dirichlet series kernels associated with the pairs $(\mf b, \mf n)$ and $(\mf c,\mf m)$ of weight and frequency data of length $d$. Then the following are equivalent.
    \begin{itemize}
        
\item [(i)] The multiplier algebras $\clm(\clh(K_{\mf b,\mf n}))$ and $\clm(\clh(K_{\mf c,\mf m}))$ are isometrically isomorphic.
        
\item [(ii)] There exists a permutation matrix $S\in\mathcal U(d)$ such that 
\[S(V)=W,\] where $V$ and $W$ are the multiplier varieties associated with $K_{\mf b, \mf n}$ and $K_{\mf c, \mf m}$, respectively. 
        
\item [(iii)] There exists a permutation $\sigma$ of $\{1,2,\ldots,d\}$ such that $(\sigma (\mf b),\sigma (\mf n))$ and $(\mf c, \mf m)$ admit a similar pattern.
    \end{itemize} 
\end{theorem}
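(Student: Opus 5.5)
I will prove the cycle (i)$\Rightarrow$(ii)$\Rightarrow$(iii)$\Rightarrow$(i). Throughout, by \cite[Proposition~5.2]{ADS} both data have common length $d$. By Proposition~\ref{variety-form}, $V$ and $W$ are monomial varieties whose numbers of generating variables are the lengths $d'$ and $d''$ of the generating subtuples.

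For (i)$\Rightarrow$(ii), the canonical isometric isomorphisms $\clm(\clh(K_{\mf b,\mf n}))\cong\clm_V$ and $\clm(\clh(K_{\mf c,\mf m}))\cong\clm_W$ from \cite[Theorem~C]{Sa-Sh} show that $\clm_V$ and $\clm_W$ are isometrically isomorphic. Since $d$ is finite, \cite[Theorems~4.5 and~5.9]{DRS} then give an automorphism $\Phi$ of $\mathbb B_d$ with $\Phi(V)=W$. Now $V$ and $W$ are biholomorphic to $\widetilde V\subseteq\mathbb C^{d'}$ and $\widetilde W\subseteq\mathbb C^{d''}$ respectively, so complex dimension gives $d'=d''$. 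Theorem~\ref{thm:existense of permutation} then yields a (block diagonal) permutation matrix $S$ with $S(V)=W$.

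For (ii)$\Rightarrow$(iii), write $S$ as the coordinate map $\bs z\mapsto(z_{\tau(1)},\dots,z_{\tau(d)})$ for a suitable permutation $\tau$. The main point is to identify $S(V)$ with the multiplier variety of a permuted kernel. Permuting the data by $\sigma$ permutes the components of $f_{\mf b,\mf n}$, and hence of $\mathcal M_d$, by the corresponding unitary. Choosing $\sigma$ with $f_{\sigma(\mf b),\sigma(\mf n)}=S\circ f_{\mf b,\mf n}$, the multiplier variety of $K_{\sigma(\mf b),\sigma(\mf n)}$ is therefore $S(V)=W$. Since permutation preserves the notion of $\mathcal J$, the pair $(\sigma(\mf b),\sigma(\mf n))$ is again CNP data of length $d$ whose multiplier variety equals that of $(\mf c,\mf m)$. \cite[Proposition~5.5]{ADS} (equality of multiplier varieties iff similar pattern, as used in the proof of Lemma~\ref{lem:similar-pattern}) then gives (iii). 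Note that $\sigma(\mf n)$ need not list its generators first, but Proposition~5.5 of \cite{ADS} is stated for general data, so this causes no trouble. Alternatively, because $S$ in Theorem~\ref{thm:existense of permutation} is block diagonal, one can stay within the form of Lemma~\ref{lem:similar-pattern} and verify \eqref{eq:less-identities-similar-pattern} directly from \eqref{similar_pattern}.

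For (iii)$\Rightarrow$(i), \cite[Theorem~5.6]{ADS} shows that $(\sigma(\mf b),\sigma(\mf n))$ and $(\mf c,\mf m)$ have equal multiplier varieties, hence isometrically isomorphic multiplier algebras. It remains to see that $\clm(\clh(K_{\mf b,\mf n}))=\clm(\clh(K_{\sigma(\mf b),\sigma(\mf n)}))$. This holds because the kernels are literally equal: $\sum_j b_jn_j^{-s-\overline u}$ is invariant under simultaneous reindexing. The step I expect to need the most care is the bookkeeping in (ii)$\Rightarrow$(iii), namely matching the direction of $\sigma$ versus $\sigma^{-1}$ with the matrix $S$, and confirming that the variety of the reindexed kernel is exactly $S(V)$. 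I would check this by the intertwining $\mathcal M_d\circ S^{-1}=\mathcal M_d$ (unitary invariance of the Drury--Arveson multiplier algebra).
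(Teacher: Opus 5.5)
Your proof is correct. For (i)$\Rightarrow$(ii) and (iii)$\Rightarrow$(i) it coincides with the paper's proof: \cite[Theorem~C]{Sa-Sh} and \cite[Theorems~4.5, 5.9]{DRS} followed by Theorem~\ref{thm:existense of permutation}, and $K_{\mf b,\mf n}=K_{\sigma(\mf b),\sigma(\mf n)}$ together with \cite[Theorem~5.6]{ADS}. Your (ii)$\Rightarrow$(iii), however, takes a genuinely different route.

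The paper re-applies Theorem~\ref{thm:existense of permutation} to the given permutation matrix, viewed as a ball automorphism, to replace it by a block-diagonal one. It then writes $V$ and $W$ explicitly via Proposition~\ref{variety-form} and uses the identity theorem to extract $\bm\alpha_{\sigma'^{-1}(j)}=\sigma(\bm\beta_j)$ and the corresponding weight identities. It finishes with Lemma~\ref{lem:similar-pattern}. Block-diagonality is exactly what ensures that the reindexed frequency tuple still lists its generators first, so that the lemma applies.

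You instead observe that reindexing the data by $\tau$ replaces $f_{\mf b,\mf n}$ by $S\circ f_{\mf b,\mf n}$. Since $h\mapsto h\circ S$ maps $\clm_d$ onto itself, the multiplier variety of the reindexed kernel is precisely $S(V)=W$. This is the same unitary-invariance argument the paper uses in the proof of Theorem~\ref{rigidity-isometric-classical}. Then \cite[Proposition~5.5]{ADS} converts equality of varieties directly into a similar pattern.

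What your route buys: it is shorter, it handles an arbitrary permutation $S$ without a second appeal to Theorem~A, and it uses no monomial structure in this step. Its one cost is that it needs \cite[Proposition~5.5]{ADS} for pairs whose generating entries are not listed first. The paper only invokes that proposition, inside Lemma~\ref{lem:similar-pattern}, for data in the ordered form. Since similar patterns are defined for arbitrary data, this is presumably how the proposition is stated, but it is the point you should confirm.

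What the paper's route buys: its computation gives the explicit exponent and weight relations. It also shows directly that the permutation in (iii) can be chosen to preserve the split $\{1,\dots,d'\}\cup\{d'+1,\dots,d\}$, as recorded in part (a) of the remark following the theorem.

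A minor point: your ``alternative'' remark presumes the permutation in (ii) is already block diagonal. Starting from an arbitrary permutation in (ii), you would first have to re-apply Theorem~\ref{thm:existense of permutation}, as the paper does.
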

\begin{proof}

$(i) \implies (ii)$. Let $\mathcal M_V$ and $\mathcal M_W$ be the restriction of multiplier algebras of the Drury-Arveson space over $\mathbb B_d$ to the multiplier varieties $V$ and $W$ associated with $K_{\mf b,\mf n}$ and $K_{\mf c,\mf m}$, respectively. 
Since the algebras $\clm(\clh(K_{\mf b,\mf n}))$ and $\clm(\clh(K_{\mf c,\mf m}))$ are isometrically isomorphic, $\mathcal M_V$ and $\mathcal M_W$ are isometrically isomorphic, and hence by \cite[Theorem 5.9]{DRS}, $\mathcal M_V$ and $\mathcal M_W$ are completely isometrically isomorphic. Thus, by \cite[Theorem 4.5]{DRS}, there exists $\Phi \in \text{Aut}(\mathbb B_d)$ such that $\Phi(V) = W$. Observe that, by Proposition~\ref{variety-form},  $V$ and $W$ are monomial varieties. Since they are biholomorphic, the number of generating variables for $V$ and $W$ must be the same. Otherwise, the corresponding Reinhardt domains $\widetilde V$ and $\widetilde W$, as defined in \eqref{graph-domain}, would lie in complex Euclidean spaces of different dimensions, contradicting the fact that they are biholomorphic. Finally, since $V$ and $W$ are monomial varieties with same number of generating variables and $\Phi(V)=W$ for an automorphism $\Phi$ of $\mathbb B_d$, (ii) follows from Theorem~\ref{thm:existense of permutation}. 

$(ii) \implies (iii)$. Since $V$ and $W$ are biholomorphic monomial varieties, the same argument used above in the proof of $(i)\Rightarrow(ii)$ shows that they have the same number of generating variables. Moreover, by Proposition~\ref{variety-form}, the number of generating variables of a multiplier variety agrees with the length of the generating subtuple of the corresponding frequency data. Hence, the generating subtuples of $\mf n$ and $\mf m$ have the same length, say $d'$. Suppose that $\mf n'=(n_j)_{j=1}^{d'}$ and $\mf m'=(m_j)_{j=1}^{d'}$ are the generating subtuples of $\mf n$ and $\mf m$, respectively, and
\[
    n_j=  (\mf n')^{\bm\alpha_j}\ \text{and } m_j=(\mf m')^{\bm\beta_j},\quad \bm \alpha_j, \bm \beta_j\in \mathbb Z_{+}^{d'}, \quad j=d'+1,\dots, d.
\] 
Then by Proposition~\ref{variety-form}, 
 \[
 V=\Big\{\bs z\in\mathbb B_d:z_j=\sqrt{b_j} (\bm b')^{-\frac{1}{2}\bm\alpha_j}  \prod\limits_{i=1}^{d'}z_i^{\alpha_{ij}},\ j=d'+1,d'+2,\dots,d\Big\}
 \]
    and 
 \[
 W=\Big\{\bs z\in\mathbb B_d:z_j=\sqrt{c_j} (\bm c')^{-\frac{1}{2}\bm\beta_j}  \prod\limits_{i=1}^{d'}z_i^{\beta_{ij}}\ j=d'+1,d'+2,\dots,d\Big\},
\]
where $\mf b'=(b_1, b_2,\dots, b_{d'})$ and $\mf c'=(c_1, c_2,\dots,c_{d'})$. 
Now, by (ii) and Theorem~\ref{thm:existense of permutation}, we assume that the permutation matrix $U\in\mathcal U(d)$ for which $U(V)=W$ is of the form 
\[
U = \Big[e_{\sigma(1)},\dots,e_{\sigma(d')},e_{\sigma'(d'+1)}, \ldots, e_{\sigma'(d)}\Big]
\]
for some permutations $\sigma$ of $\{1,2,\dots,d'\}$ and $\sigma'$ of $\{d'+1,\dots,d\}$. Then note that 
\begin{equation}\label{definition of U}
U(z_1,z_2,\dots,z_d)=(z_{\sigma^{-1}(1)},\dots,z_{\sigma^{-1}(d')},z_{\sigma'^{-1}(d'+1)},\dots,z_{\sigma'^{-1}(d)}). 
\end{equation}
For any $(z_1,z_2,\dots, z_{d'})\in \widetilde{V}$, the Reinhardt domain associated with $V$ as in ~\eqref{graph-domain}, 
\[
\big(z_1,\dots,z_{d'},\sqrt{b_j} (\bm b')^{-\frac{1}{2}\bm\alpha_{d'+1}}  \prod\limits_{i=1}^{d'}z_i^{\alpha_{i(d'+1)}},\dots,\sqrt{b_j} (\bm b')^{-\frac{1}{2}\bm\alpha_d}  \prod\limits_{i=1}^{d'}z_i^{\alpha_{id}}\big)\in V,
\]
and therefore
\begin{align*}
&U\big(z_1,\dots,z_{d'},\sqrt{b_{d'+1}} (\bm b')^{-\frac{1}{2}\bm\alpha_{d'+1}}  \prod\limits_{i=1}^{d'}z_i^{\alpha_{i(d'+1)}},\dots,\sqrt{b_d} (\bm b')^{-\frac{1}{2}\bm\alpha_d}  \prod\limits_{i=1}^{d'}z_i^{\alpha_{id}}\big)\in W.
\end{align*}
Then by a routine calculation using ~\eqref{definition of U} and the description of $W$, we get 
\[
\sqrt{b_{\sigma'^{-1}(j)}} (\bm b')^{-\frac{1}{2}\bm\alpha_{\sigma'^{-1}(j)}}  \prod\limits_{i=1}^{d'}z_i^{\alpha_{i\sigma'^{-1}(j)}}=\sqrt{c_j} (\bm c')^{-\frac{1}{2}\bm\beta_j}  \prod\limits_{i=1}^{d'}z_{\sigma^{-1}(i)}^{\beta_{ij}},\quad j=d'+1,\dots,d,
 \]
for all $(z_1,z_2,\dots,z_{d'})\in \widetilde{V}$. Applying the Identity theorem we obtain
\[
\bm\alpha_{\sigma'^{-1}(j)}=\sigma(\bm \beta_j)\quad \text{and}\quad \sqrt{b_{\sigma'^{-1}(j)}} (\bm b')^{-\frac{1}{2}\bm\alpha_{\sigma'^{-1}(j)}}=\sqrt{c_j} (\bm c')^{-\frac{1}{2}\bm\beta_j}, \quad j=d'+1,\dots,d.
\]
By Lemma~\ref{lem:similar-pattern}, this shows that
$(\tilde{\sigma}^{-1}(\mf b),\tilde{\sigma}^{-1}(\mf n))$ and $(\mf c,\mf m)$ admit a similar pattern, where $\tilde{\sigma}$ is the permutation of $\{1,2,\dots,d\}$ obtained by joining $\sigma$ and $\sigma'$ together. This establishes (iii).

$(iii)\implies(i)$. 
It is easy to see that $K_{\mf b, \mf n}= K_{\sigma(\mf b), \sigma(\mf n)}$ for any permutation $\sigma$ of $\{1,2,\dots, d\}$. Then (i) follows from \cite[Theorem 5.6]{ADS}. This finishes the proof. 
\end{proof}
\begin{remark}
(a) As stated in Theorem~\ref{thm:existense of permutation}, the permutation automorphism appearing in item~{\rm (ii)} of the preceding result may be chosen to be block diagonal with respect to the decomposition
\[
\mathbb C^d=\mathbb C^{d'}\oplus \mathbb C^{d-d'}.
\]
Equivalently, the permutation \(\sigma\) of \(\{1,2,\ldots,d\}\) appearing in item~{\rm (iii)} may be chosen so that it is induced by a permutation of \(\{1,2,\ldots,d'\}\) together with a permutation of \(\{d'+1,d'+2,\ldots,d\}\).

(b) The equivalence between {\rm (i)} and {\rm (iii)} reveals the rigidity of the weight and frequency data imposed by an isometric isomorphism between the multiplier algebras associated with integrally generated CNP Dirichlet series kernels.

(c) The equivalence between {\rm (i)} and {\rm (ii)} admits a useful rigidity interpretation. Namely, if \(V,W\subseteq \mathbb B_d\) are monomial varieties, then \[ \mathcal M_V \overset{\mathrm{isometric}}{\cong}\mathcal M_W \] if and only if there exists a permutation matrix $U\in\mathcal U(d)$
such that \(U(V)=W. \) Thus, within the class of monomial varieties, the isometric isomorphism class of the multiplier algebra completely determines the underlying variety up to permutation of the ambient coordinates. This may be viewed as a multiplier-algebra analogue of Gelfand-Kolmogorov theorem for commutative \(C^*\)-algebras. 
However, the conclusion is substantially more rigid. An isometric isomorphism between the multiplier algebras of two monomial varieties does not merely recover the varieties up to an abstract homeomorphism or biholomorphism. Rather, it recovers them up to a permutation of the ambient coordinate axes. Consequently, the normed algebra \(\mathcal M_V\) retains precise information about the way in which \(V\) is embedded in the ambient ball.
\end{remark}
We now illustrate the theorem with two examples: one yielding isometric isomorphism and the other yielding algebras that are not isometric isomorphism.  
\begin{example}
(i) Consider the frequency tuples 
\[
\mf n=(p_1,p_2,p_1^2,p_1p_2,p_1^3,p_2^2,p_1^2p_2,p_1^3p_2^2),\ \text{ and }
    \mf m=(q_1,q_2,q_1q_2,q_1^2,q_1^3q_2^2,q_2^2,q_1^2q_2,q_1^3)
\]
such that both $\{\ln p_1,\ln p_2\}$ and $\{\ln q_1,\ln q_2\}$ are linearly independent over $\mathbb Q$. For strictly positive real numbers $b_1,b_2,c_1,c_2$, define the weight tuples
\[
    \mf b=(b_1,b_2,b_1^2,b_1b_2,b_1^3,b_2^2,b_1^2b_2,b_1^3b_2^2),\  \text{ and }
    \mf c=(c_1,c_2,c_1c_2,c_1^2,c_1^3c_2^2,c_2^2,c_1^2c_2,c_1^3).
\]
Consider the permutation $\sigma=(3\ 4)(5\ 8)$, with all other indices fixed. Then observe that after applying $\sigma$, the entries of $\mf n$ have precisely the same monomial expressions in $p_1,p_2$ as the corresponding entries of $\mf m$ have in $q_1,q_2$. The same applies to the weight data $\mf b$ and $\mf c$ also. Consequently, $(\sigma(\mf b),\sigma(\mf n))$ and $(\mf c,\mf m)$ admit a similar pattern. By Theorem~\ref{rigidity-Isometric},
    \[
    \clm(\clh(K_{\mf b,\mf n}))
    \overset{\mathrm{isometric}}{\cong}
    \clm(\clh(K_{\mf c,\mf m})).
    \]

(ii) Consider the frequency tuples 
\[
    \mf n=(p_1,p_2,p_1^2,p_1p_2,p_1^3,p_2^2,p_1^2p_2,p_1^3p_2^2),\ \text{ and }
    \mf m=(q_1,q_2,q_1^2,q_1q_2,q_1^4,q_2^2,q_1^2q_2,q_1^3q_2^2),
    \]
such that $\{\ln p_1,\ln p_2\}$ and $\{\ln q_1,\ln q_2\}$ are linearly independent over $\mathbb Q$. The tuple $\mf m$ contains both $q_1$ and $q_1^4$, whereas the linear independence of $\ln p_1$ and $\ln p_2$ ensures that no entry of $\mf n$ is the fourth power of another.
Thus, no permutation of $\mf n$ can preserve the multiplicative relations of $\mf m$. It follows that for any strictly positive weight tuples $\mf b,\mf c$ and any permutation $\sigma$ of $\{1,\ldots,8\}$, the pairs $(\sigma(\mf b),\sigma(\mf n))$ and $(\mf c,\mf m)$ cannot admit a similar pattern. Hence, by Theorem~\ref{rigidity-Isometric}, the multiplier algebras
    \[
    \clm(\clh(K_{\mf b,\mf n}))
    \quad\text{and}\quad
    \clm(\clh(K_{\mf c,\mf m}))
    \]
    are not isometrically isomorphic, regardless of the choice of weights.

\end{example}

\subsection{On classical CNP kernels}
There is a well-known connection between CNP Dirichlet series kernels and classical CNP kernels on domains in \(\mathbb C^n\) that preserves the associated multiplier varieties. This connection is established in \cite[Section~7]{ADS}. We use it to characterize isometric isomorphisms between multiplier algebras for a certain class of CNP kernels defined on domains in \(\mathbb C^n\).

The class of CNP kernels we consider is determined by a pair $(\mf b, \bm \alpha)$, where
\[\mf b = (b_j)_{j=1}^d \in (0, \infty)^d \text{ and } \bm \alpha=\{\bm\alpha_{d'+1}, \bm\alpha_{d'+2}, \dots, \bm\alpha_d\} \subseteq \mathbb Z_+^{d'},\]
for some $ d\ge 2$ and $d'<d$ such that  $\bm \alpha_j=(\alpha_{1j}, \alpha_{2j},\dots,\alpha_{d'j})$ and $\bm\alpha_j$'s are disitnct with $|\bm\alpha_j|\ge 2$. Given such a pair, define 
\[
\Omega_{\mf b, \bm \alpha}:=\Big\{\bs z\in \mathbb C^{d'}:\sum\limits_{j=1}^{d'}b_j |z_j|^2 +\sum\limits_{j=d'+1}^d b_j \prod\limits_{i=1}^{d'} |z_i|^{2\alpha_{ij}}<1\Big\},
\]
and consider the CNP kernel  $\cls_{\mf b, \bm \alpha} : \Omega_{\mf b, \bm \alpha} \times \Omega_{\mf b, \bm \alpha} \rar \mathbb C$ given by 
\begin{equation}
\label{sbm}
\cls_{\mf b, \bm \alpha} (\bs z, \bs  w) 
= \frac{1}{1 - 
\sum\limits_{j=1}^{d'}b_j z_j \overline{w_j} - \sum\limits_{j=d'+1}^d b_j \prod\limits_{i=1}^{d'} (z_i \overline{w_i})^{\alpha_{ij}}} = \frac{1}{1 - \inp{f({\bs  z})}{f({\bs w})}_{\mathbb C^d}}
\end{equation}
where the embedding map $f$ is given by
\begin{equation}\label{eq:def-f_b_alpha}
    f(\bs z) = (\sqrt{b_{1}}z_1, \ldots, \sqrt{b_{d'}}z_{d'}, \sqrt{b_{d'+1}}{\bs z}^{\bm \alpha_{d'+1}}, \ldots, \sqrt{b_{d}}{\bs z}^{\bm \alpha_{d}}), \quad \bm z=(z_1,\dots, z_{d'}),
\end{equation}
for all $\bs z, \bs w\in \Omega_{\mf b, \bm \alpha}$.
Corresponding to each of these kernels, we can associate a CNP Dirichlet series kernel without changing the multiplier varieties as follows. Fix $d'$ distinct primes and enumerate them as $\{p_i:i=1,\dots,d'\}$. For any $\bm\gamma = (\gamma_1, \gamma_2, \ldots, \gamma_{d'}) \in \mathbb Z_+^{d'}$, we set 
\begin{align*}
\mf p^{\bm\gamma}: = \prod_{i =1}^{d'} p_i^{\gamma_i}.
\end{align*}
Then, the CNP Dirichlet series kernel associated to $\cls_{\mf b, \bm \alpha}$ is given by 
\begin{align*}
K_{\mf b, \mf n}(s, u) = \frac{1}{1 - \sum\limits_{j=1}^{d'}b_j p_j^{-s-\overline{u}} - \sum\limits_{j=d'+1}^d b_j \prod\limits_{i=1}^{d'} p_i^{-\alpha_{ij}(s+\overline{u})}}, \quad s, u \in \mathbb H_\rho,
\end{align*}
where $\bm\alpha_j := (\alpha_{1j},  \dots, \alpha_{d'j}) \in \mathbb Z_+^{d'}$ for each $j=d'+1, \ldots, d$, $\mf b = (b_{1}, \ldots, b_{d})$ is the weight data, $\mf n = (p_1, \ldots, p_{d'}, \mf p^{\bm\alpha_{d'+1}}, \ldots, \mf p^{\bm\alpha_d})$ is the frequency data, and $\rho\in\mathbb R$ is chosen so that the kernel is well-defined on $\mathbb H_\rho$. 
By \cite[Proposition~7.1]{ADS}, the multiplier varieties associated with \(\cls_{\mf b,\bm\alpha}\) and \(K_{\mf b,\mf n}\) coincide. Therefore, applying the characterization obtained for CNP Dirichlet series kernels in Theorem~\ref{rigidity-Isometric}, we obtain the following result.
We note that the special case \(d'=d-1\) of the following theorem was obtained in \cite[Theorem~7.3]{ADS}.

\begin{theorem}\label{rigidity-isometric-classical}
   Let $\cls_{\mf b, \bm \alpha}$ and $\cls_{\mf c, \bm \beta}$ be CNP kernels as in ~\eqref{sbm}. Then the following are equivalent: 
\begin{itemize}
\item[(i)] $\mathcal{M}(\mathcal{H}(\cls_{\mf{b}, \bm \alpha}))$ and $\mathcal{M}(\mathcal{H}(\cls_{\mf{c}, \bm \beta}))$ are isometrically isomorphic.
\item[(ii)] There exist permutations $\sigma$ of $\{1,2,\ldots, d'\}$ and $\sigma'$ of $\{d'+1,d'+2,\ldots,d\}$ such that 
\begin{equation}\label{similarity}
\sigma(\bm \alpha_{\sigma'(j)})=\bm\beta_j \ \text{ and } (\mf c')^{\bm\beta_j}b_{\sigma'(j)}
=
\sigma(\mf b')^{\bm\beta_j}c_j,\quad  j=d'+1,\dots, d,
\end{equation}
where $
\bm b'=(b_1,b_2,\dots, b_{d'})\ \text{ and } \bm c'=(c_1, c_2,\dots, c_{d'})$.

\item[(iii)]  There exists a permutation matrix $U\in\mathcal U(d)$ such that
\[
U(f(\Omega_{\mf b, \bm \alpha}))=g(\Omega_{\mf c, \bm \beta}),
\]
where $f$ and $g$ are embedding maps 
as defined in \eqref{eq:def-f_b_alpha}.
\end{itemize} 
\end{theorem}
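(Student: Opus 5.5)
The plan is to reduce everything to Theorem~\ref{rigidity-Isometric} via the correspondence of \cite[Section~7]{ADS}. Fix $d'$ distinct primes $p_1,\dots,p_{d'}$ and form the Dirichlet series kernels $K_{\mf b,\mf n}$ and $K_{\mf c,\mf m}$ with
\[
\mf n=(p_1,\dots,p_{d'},\mf p^{\bm\alpha_{d'+1}},\dots,\mf p^{\bm\alpha_d}),\qquad \mf m=(p_1,\dots,p_{d'},\mf p^{\bm\beta_{d'+1}},\dots,\mf p^{\bm\beta_d}).
\]
Since $\{\ln p_i\}$ is $\mathbb Q$-independent and the exponents lie in $\mathbb Z_+^{d'}$, both kernels are integrally generated. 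Their generating subtuples have length $d'$, with exponent vectors $\bm\alpha_j$ and $\bm\beta_j$. The frequencies are distinct because the $\bm\alpha_j$ are distinct with $|\bm\alpha_j|\ge2$.

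By \cite[Proposition~7.1]{ADS}, the multiplier variety of $\cls_{\mf b,\bm\alpha}$ coincides with that of $K_{\mf b,\mf n}$. Call it $V$; by Proposition~\ref{variety-form} it is the monomial variety with coefficients $\sqrt{b_j}(\mf b')^{-\frac12\bm\alpha_j}$. Define $W$ likewise for $\mf c,\bm\beta$. The next step is to identify $V$ with the closure-type object appearing in (iii). A direct check shows that $f(\Omega_{\mf b,\bm\alpha})$ is exactly the monomial variety $V$: writing $w_i=\sqrt{b_i}z_i$ for $i\le d'$ gives
\[
\sqrt{b_j}\,\bs z^{\bm\alpha_j}=\sqrt{b_j}(\mf b')^{-\frac12\bm\alpha_j}\bs w^{\bm\alpha_j},
\]
and the condition $\|f(\bs z)\|<1$ is precisely the defining condition of $\Omega_{\mf b,\bm\alpha}$. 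So $f(\Omega_{\mf b,\bm\alpha})=V$ and $g(\Omega_{\mf c,\bm\beta})=W$.

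Moreover, the multiplier algebra of a kernel pulled back from Drury--Arveson via an injective embedding is isometrically isomorphic to $\mathcal M_V$ \cite[Theorem~C]{Sa-Sh}. The same holds for $K_{\mf b,\mf n}$. Hence $\mathcal M(\mathcal H(\cls_{\mf b,\bm\alpha}))\cong\mathcal M(\mathcal H(K_{\mf b,\mf n}))$ isometrically, and similarly for $\mf c,\bm\beta$. It follows that (i) is equivalent to condition (i) of Theorem~\ref{rigidity-Isometric}, and (iii) is equivalent to its condition (ii).

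It remains to match (ii) with the arithmetic condition. For (iii)$\Rightarrow$(ii), apply Theorem~\ref{thm:existense of permutation} to take $U$ block diagonal, built from permutations $\sigma$ of $\{1,\dots,d'\}$ and $\sigma'$ of $\{d'+1,\dots,d\}$. Then repeat the identity-theorem computation from the proof of (ii)$\Rightarrow$(iii) in Theorem~\ref{rigidity-Isometric}. Comparing the $j$-th coordinate of $U(V)$ with the defining equation of $W$ yields
\[
\bm\alpha_{\sigma'(j)}=\sigma(\bm\beta_j)\quad\text{and}\quad \sqrt{b_{\sigma'(j)}}\,(\mf b')^{-\frac12\bm\alpha_{\sigma'(j)}}=\sqrt{c_j}\,(\mf c')^{-\frac12\bm\beta_j}.
\]
Squaring and using $(\mf b')^{\sigma(\bm\beta_j)}=(\sigma^{-1}\text{-reindexed }\mf b')^{\bm\beta_j}$ gives \eqref{similarity}, after possibly replacing $\sigma$ by $\sigma^{-1}$ to match the stated convention.

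Conversely, given \eqref{similarity}, define $U$ from $\sigma,\sigma'$ exactly as at the end of the proof of Theorem~\ref{thm:existense of permutation}. The same coordinatewise computation, run backwards, shows $U(V)=W$, which is (iii).

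The main obstacle is bookkeeping rather than substance. One must get the direction of the permutations right: $\sigma$ versus $\sigma^{-1}$, and $\sigma'$ versus $\sigma'^{-1}$. One must also check that the weight identity transforms correctly under the reindexing $\bs z\mapsto\sigma^{-1}(\bs z)$. I would fix the convention $\sigma(\bm\gamma)=(\gamma_{\sigma(1)},\dots)$ and verify the identity $(\sigma(\mf b'))^{\bm\beta_j}=(\mf b')^{\sigma^{-1}(\bm\beta_j)}$ carefully once, then use it in both directions.
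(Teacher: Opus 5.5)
Your proof is correct, but it is organized differently from the paper's.

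\textbf{The paper's route.} It proves the cycle (i)$\Rightarrow$(ii)$\Rightarrow$(iii)$\Rightarrow$(i).
For (i)$\Rightarrow$(ii) it passes through the \emph{arithmetic} condition (iii) of Theorem~B. It then argues that the resulting permutation $\tilde\sigma$ of $\{1,\dots,d\}$ maps $\{1,\dots,d'\}$ onto itself, and extracts \eqref{similarity} from Lemma~\ref{lem:similar-pattern}.
For (iii)$\Rightarrow$(i) it deduces $U(V)=W$ abstractly. It uses only that $V$ and $W$ are the smallest multiplier varieties containing $f(\Omega_{\mf b,\bm\alpha})$ and $g(\Omega_{\mf c,\bm\beta})$, and then applies the Davidson--Ramsey--Shalit theorem.

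\textbf{Your route.} Your key observation is that the substitution $w_i=\sqrt{b_i}\,z_i$ shows $f(\Omega_{\mf b,\bm\alpha})$ is \emph{already} the whole monomial variety $V$ of Proposition~\ref{variety-form}. This makes (iii) literally condition (ii) of Theorem~B, so (i)$\Leftrightarrow$(iii) is immediate. Then (ii)$\Leftrightarrow$(iii) reduces to the identity-theorem computation from the proof of Theorem~B.

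\textbf{What each buys.} Your route bypasses the similar-pattern machinery entirely. In particular it avoids the step that $\tilde\sigma$ fixes the generating indices. The paper states that step without detail, and it actually needs a small degree argument: the $p_i$ are primes and $|\bm\alpha_j|\ge 2$, so no $p_i$ is a nonnegative-integer product of the other frequencies.
In exchange, the paper's closure argument in (iii)$\Rightarrow$(i) would survive even if the embedded image were a proper subset of its multiplier variety. Its (i)$\Rightarrow$(ii) route also exhibits \eqref{similarity} explicitly as the specialization of the similar-pattern condition.
Your argument is shorter and purely geometric. Since $f(\Omega_{\mf b,\bm\alpha})=V$ is cut out by polynomials, you do not even need \cite[Proposition~7.1]{ADS} to identify the multiplier variety of $\cls_{\mf b,\bm\alpha}$.

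\textbf{A minor simplification.} In (iii)$\Rightarrow$(ii) you need not invoke the torus-conjugacy part of Theorem~A. Any unitary $U$ with $U(V)=W$ maps $T_0V=\text{span}\{e_1,\dots,e_{d'}\}$ onto $T_0W$, which is the same subspace. Hence a permutation matrix with $U(V)=W$ is automatically block diagonal.
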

\begin{proof} 
Suppose $K_{\mf b, \mf n}$ and $K_{\mf c, \mf m}$ are the CNP Dirichlet series kernels associated with $\cls_{\mf b, \bm \alpha}$ and $\cls_{\mf c, \bm \beta}$, respectively, where
$$
\mf n=\big(p_1, \dots, p_{d'}, \mf p^{\bm\alpha_{d'+1}}, \ldots, \mf p^{\bm\alpha_d}\big)~\text{and}~ 
\mf m=\big(p_1,p_2,\dots, p_{d'}, \mf p^{\bm\beta_{d'+1}}, \ldots, \mf p^{\bm\beta_d}\big),$$
for distinct primes $p_1,\dots, p_{d'}$. Since the above association does not change the multiplier varieties, 
\[
\mathcal{M}(\mathcal{H}(\cls_{\mf{b}, \bm\alpha})) \overset{\mathrm{isometric}}{\cong} \mathcal{M}(\mathcal{H}(\cls_{\mf{c}, \bm \beta}))\ \text{ if and only if } \mathcal{M}(\mathcal{H}(K_{\mf{b}, \mf n}))\overset{\mathrm{isometric}}{\cong} \mathcal{M}(\mathcal{H}(K_{\mf{c}, \mf m})).
\] 
$(i) \implies (ii)$. Since $\mathcal{M}(\mathcal{H}(K_{\mf{b}, \mf n}))$ is isometrically isomorphic to $\mathcal{M}(\mathcal{H}(K_{\mf{c}, \mf m}))$, by Theorem~\ref{rigidity-Isometric},
we obtain a permutation \(\tilde{\sigma}\) of \(\{1,2,\ldots,d\}\) such that \((\tilde{\sigma}(\mf b),\tilde{\sigma}(\mf n))\) and \((\mf c,\mf m)\) admit a similar pattern. Since the first $d'$ many frequencies of $\mf m$ is a generating subtuple for $\mf m$, the same must be true for $\tilde{\sigma}(\mf n)$ also. This forces $\tilde{\sigma}$ to have the property that 
\[\tilde{\sigma}(\{1,2,\dots, d'\})=\{1,2,\dots, d'\}.\] 
This allow us to define permutations $\sigma$ of the sets \(\{1,\ldots,d'\}\) and  $\sigma'$ of the set \(\{d'+1,\ldots,d\}\) as 
\[
\sigma(i)=\tilde{\sigma}(i),\quad i=1,\ldots,d'\quad\text{and}\quad \sigma'(i)=\tilde{\sigma}(i),\qquad i=d'+1,\ldots,d.
\] 
Since
\[
\tilde{\sigma}(\mf b)=(b_{\sigma(1)}, b_{\sigma(2)},\dots, b_{\sigma(d')}, b_{\sigma'(d'+1)},\dots, b_{\sigma'(d)})\]
and 
\[
\tilde{\sigma}(\mf n)=(p_{\sigma(1)}, p_{\sigma(2)},\dots, p_{\sigma(d')}, \mf p^{\bm \alpha_{\sigma'(d'+1)}},\dots, \mf p^{\bm \alpha_{\sigma'(d)}}),
\]
the generating subtuple for $\tilde{\sigma}(\mf n)$ is $\sigma(\mf p)=(p_{\sigma(1)}, p_{\sigma(2)},\dots, p_{\sigma(d')})$. Moreover, the $j$-th frequency of $\tilde{\sigma}(\mf n)$ can be expressed as 
\[
\mf p^{\bm \alpha_{\sigma'(j)}}=\sigma(\mf p)^{\sigma (\bm \alpha_{\sigma'(j)})}, \quad j=d'+1,\dots,d.
\]
Then by applying Lemma~\ref{lem:similar-pattern} to $(\tilde{\sigma}(\mf b), \tilde{\sigma}(\mf n))$ and $(\mf c, \mf m)$ we get 
\[
\sigma(\bm\alpha_{\sigma'(j)})=\bm \beta_j\ \text{ and } (\bm c')^{\bm\beta_j}b_{\sigma'(j)}= \sigma(\bm b')^{\sigma(\bm\alpha_{\sigma'(j)})} c_j,\quad  j=d'+1,\dots, d,
\]
where $\mf b'=(b_1, b_2,\dots, b_{d'})$ and $\mf c'=(c_1, c_2,\dots, c_{d'})$.
This proves (ii).



$(ii) \implies (iii)$. Define a permutation matrix $U\in \mathcal U(d)$ by
\[
U=\Big[e_{\sigma^{-1}(1)},\dots,e_{\sigma^{-1}(d')},e_{\sigma'^{-1}(d'+1)},\dots,e_{\sigma'^{-1}(d)}\Big].
\] 
Then for all $(z_1,z_2,\dots, z_d)\in \mathbb C^d$, $U$ satisfies
\[
U(z_1,z_2,\dots,z_d)=\big(z_{\sigma(1)},\dots,z_{\sigma(d')},z_{\sigma'(d'+1)},\dots,z_{\sigma'(d)}\big).
\]
Recall that for all $\bm z\in \Omega_{\mf b, \bm \alpha}$,
\[
f(\bm z)=(\sqrt{b_{1}}z_1, \ldots, \sqrt{b_{d'}}z_{d'}, \sqrt{b_{d'+1}}{\bs z}^{\bm \alpha_{d'+1}}, \ldots, \sqrt{b_{d}}{\bs z}^{\bm \alpha_{d}}).
\]
Let $\bm z\in \Omega_{\mf b, \bm\alpha}$ be fixed. Then 
\[
U(f(\bs z))=\big(\sqrt{b_{\sigma(1)}}z_{\sigma(1)},\dots,\sqrt{b_{\sigma(d')}}z_{\sigma(d')},\sqrt{b_{\sigma'(d'+1)}}\bs z^{\alpha_{\sigma'(d'+1)}},\dots,\sqrt{b_{\sigma'(d)}}\bs z^{\alpha_{\sigma'(d)}}\big).
\]
Set
\[
w_i:=\frac{\sqrt{b_{\sigma(i)}}}{\sqrt{c_i}}z_{\sigma(i)},\quad 1\Le i\Le d'.
\]
We show that $\bs w\in\Omega_{\mf c, \bm \beta}$ and $g(\bs w)=U(f(\bs z))$, where $\bs w=(w_1,w_2,\dots,w_{d'})$. To this end, first observe that for any $d'+1\le j\le d$,
\[
\sqrt{c_j}\bs w^{\bm \beta_j}=\sqrt{c_j}\frac{\sigma(\mf b')^{\frac{1}{2}\bm\beta_j}}{(\mf c')^{\frac{1}{2}\bm\beta_j}} \sigma(\bm z)^{\bm\beta_j} \overset{\eqref{similarity}}= \sqrt{c_j}\frac{\sqrt{b_{\sigma'(j)}}}{\sqrt{c_j}} \bm z^{\bm\alpha_{\sigma'(j)}}=\sqrt{b_{\sigma'(j)}}\bm z^{\bm\alpha_{\sigma'(j)}}. 
\]
This shows that $U(f(\bs z))=g(\bs w) $. Since $\bs z\in \Omega_{\mf b, \bm \alpha}$, we have $\|f(\bs z)\|<1$. Then $\|g(\bs w)\|= \|U(f(\bs z))\|=\|f(\bs z)\|<1$, and hence $\bs w\in \Omega_{\mf c, \bm \beta}$. Since $\bm z\in \Omega_{\mf b, \bm \alpha}$ was arbitrary,  
\[U(f(\Omega_{\mf b, \bm \alpha}))\subseteq g(\Omega_{\mf c, \bm \beta}).\]
The reverse inequality is obtained in a similar way by considering $U^{-1}$ and proving that 
\[U^{-1}(g(\Omega_{\mf c, \bm \beta}))\subseteq f(\Omega_{\mf b, \bm \alpha}).
\]
This proves (iii).

$(iii) \implies (i)$. Suppose that there exists a permutation matrix $U\in \mathcal U(d)$ such that
\[
U(f(\Omega_{\mf b, \bm \alpha}))=g(\Omega_{\mf c, \bm \beta}).
\]
Let $V$ and $W$ be multiplier varieties corresponding to the kernels $\cls_{\mf b, \bm \alpha}$ and $\cls_{\mf c, \bm \beta}$, respectively. That is \[
V=\{\bs z\in\mathbb B_d: h(\bs z)=0, \text{ whenever } h\in \mathcal M_d\ \text{ and } h|_{f(\Omega_{\mf b, \bm \alpha})}\equiv 0\},
\]
and $W$ is defined similarly. First, we show that $U(V)=W$. To this end, by the definition of $V$, for any $\bs z\in \mathbb B_d\setminus V$ there exists $h\in \mathcal M_d$ such that $h(\bs z)\neq 0$ and $h|_{f(\Omega_{\mf b, \bm \alpha})}\equiv 0$. Then $h\circ U^{-1}\in \mathcal M_d$ and 
\[
h\circ U^{-1}|_{g(\Omega_{\bm c, \bm\beta})}=
h\circ U^{-1}|_{U(f(\Omega_{\bm b, \bm\alpha}))}\equiv 0, \text{ however } h\circ U^{-1}(U(\bs z))= h(\bs z)\neq 0.
\]
It follows that $U(\bs z)\in \mathbb B_d\setminus W$ whenever $\bs z\in \mathbb B_d\setminus V$. Thus,
\[
W\subseteq U(V)
\]
For the other inclusion, let $\bs z\in V$. For any $h\in\clm_d$ with $h\circ U|_{f(\Omega_{\mf b, \bm \alpha})}\equiv0$, since $V$ is the smallest multiplier variety containing $f(\Omega_{\mf b, \bm \alpha})$ and $h\circ U\in\clm_d$, we get 
\[
h\circ U|_{V}\equiv 0, \text{ in particular } h(U(\bs z))=0.
\]
This shows that $h(U(\bs z))=0$ whenever $h\in \mathcal M_d$ and $h|_{g(\Omega_{\bm c, \bm\beta})}\equiv 0$. Consequently, by the definition of $W$, $U(\bs z)\in W$, and since $\bs z$ was arbitrary
\[
U(V)\subseteq W.
\]
Thus, $U(V)=W$. Now, applying \cite[Theorem 4.5]{DRS}, we conclude that $\clm_V$ and $\clm_W$ are isometrically isomorphic, and hence $\mathcal{M}(\mathcal{H}(\cls_{\mf{b}, \alpha}))$ and $\mathcal{M}(\mathcal{H}(\cls_{\mf{c}, \bm \beta}))$ are isometrically isomorphic.
This completes the proof.
\end{proof}

\vspace{0.1in} 

\noindent\textbf{Acknowledgement:} The first named author is supported by Prime Minister's Research Fellowship, ID: 1303115. The second named author's research is supported by ANRF-MATRICS Grant
ANRF/ARGM/2025/002455/MTR. The third named author acknowledges the support received from the Indian Institute of Technology Bombay through an Institute postdoctoral fellowship.

\vspace{0.1in}
\noindent\textbf{Data Availability:}
Data sharing is not applicable to this article, as no datasets were generated or analyzed during
the current study. In case any datasets are generated during and/or analyzed during the current study, they must
be available from the corresponding author on reasonable request.

\vspace{0.1in}
\noindent\textbf{Conflict of interest:} There is no conflict of interest.

\end{document}